\numberwithin{equation}{section}
\newtheorem{thrm}{Theorem}[section]
\newtheorem{lemma}[thrm]{Lemma}
\newtheorem{prop}[thrm]{Proposition}
\newtheorem{dfn}[thrm]{Definition}
\newtheorem{rmrk}[thrm]{Remark}
\newtheorem{exm}[thrm]{Example}
\newtheorem{conv}[thrm]{Convention}
\title[Applications of Weak Metric Structures to Non-Symmetrical Gravitational Theory]
{Applications of Weak Metric Structures \\ to Non-Symmetrical Gravitational Theory
}
\date{\today}
\author{Milan Zlatanovi\'c}
\address
{Milan Zlatanovi\'c: Department of Mathematics, Faculty of Science and Mathematics, University of Ni\v s, Vi\v segradska 33, 18000 Ni\v s, Serbia}
\email{zlatmilan@yahoo.com}
\author{Vladimir Rovenski}
\address
{Vladimir Rovenski: Department of Mathematics,
Faculty of Natural Science, University of Haifa,
Mount Carmel,
3498838 Haifa, Israel}
\email{vrovenski@univ.haifa.ac.il}
\begin{document}

\begin{abstract}
Linear connections satisfying the Einstein metricity condition
are important in the study of generalized Riemannian manifolds
$(M,G=g+F)$, where the symmetric part $g$ of $G$ is a non-degenerate $(0,2)$-tensor, and $F$ is the skew-symmetric part.
Such structures naturally arise in spacetime models in theoretical physics, where $F$ can be defined as an almost complex or almost contact metric (a.c.m.) structure. In the paper, we first study more general models, where $F$ has constant rank and is based on weak metric structures (introduced by the second author and R.~Wolak), which generalize almost complex and a.c.m. structures. We consider linear connections with totally skew-symmetric torsion that satisfy both the Einstein metricity condition and the $A$-torsion condition, where $A$ is a skew-symmetric (1,1)-tensor adjoint to~$F$. In the almost Hermitian case, we prove that the manifold with such a connection is weak nearly K\" ahler, the torsion is completely determined by the exterior derivative of the fundamental 2-form and the Nijenhuis tensor, and the structure tensors are parallel, while in the weak a.c.m. case, the contact distribution is involutive, the Reeb vector field is Levi-Civita parallel, and the structure tensors are also parallel with respect to both connections.
For rank$(F)=\dim M$, we apply weak almost Hermitian structures to fundamental results (by the first author and S. Ivanov) on
generalized Riemannian manifolds and prove that the manifold
equipped with an Einstein's connection is a weighted product of several nearly K\"ahler manifolds.
For~rank$(F)<\dim M$ we apply weak almost Hermitian and weak a.c.m. structures and obtain splitting results for generalized Riemannian manifolds equipped with Einstein's connections.

\vskip1.mm\noindent
\textbf{Keywords}: weak almost Hermitian structure, weak almost contact structure, weak nearly K\"ahler structure, weak nearly cosymplectic, Einstein metricity condition, totally skew-symmetric torsion, $A$-torsion condition, generalized Riemannian manifold.

\vskip1.mm
\noindent
\textbf{Mathematics Subject Classifications (2010)} 53C15, 53C25, 53B05.
\end{abstract}

\date{\today }
\maketitle


\setcounter{tocdepth}{2}

\section{Introduction}

\subsection{Motivation From General Relativity}

General relativity (GR) was developed by A. Einstein in 1916
\cite{Ein1}, with contributions by many others after 1916.
In GR the equation $ds^2=g_{ij}dx^idx^j$
is valid, where $g_{ij}=g_{ji}$ are functions of the point in the space. In GR which is the four dimensional space-time continuum metric properties depend on the mass distribution.
The magnitudes $g_{ij}$  are known as \emph{gravitational
potential}.
The Christoffel symbols, usually denoted by
$\Gamma^k_{ij}$, play the role of magnitudes defining the gravitational force field. General relativity explains gravity as a curvature of spacetime.
In the GR a metric tensor is related by the Einstein equations
\begin{equation}\label{E-GR1}
 R_{ij}-\frac 12\,R\,g_{ij}=T_{ij},
\end{equation}
where $R_{ij}$ is Ricci tensor of metric of space time, $R$ is
scalar curvature of metric, and $T_{ij}$ is energy-momentum tensor of matter.
However, in 1922, A. Einstein believed that the universe is apparently static, and since a static cosmology was not supported by the GR field equations, he added a cosmological constant $\Lambda$ to the field equations \eqref{E-GR1}, which became
\begin{equation}\label{E-GR2}
 R_{ij}-\frac 12\,R\,g_{ij}+\Lambda\,g_{ij} =T_{ij}. \end{equation}
Since 1923 until the end of his life, A. Einstein worked on various versions of the Unified Field Theory (Non-symmetric Gravitational Theory -- NGT) \cite{Ein}. This theory was intended to unite the gravitation theory, to which GR is related, and the theory of electromagnetism.
Introducing various versions of his NGT, A. Einstein used a complex basic tensor, with a symmetric real part and a skew-symmetric imaginary part. Beginning in 1950, A. Einstein used a real non-symmetric basic tensor $G$, sometimes called a \emph{generalized Riemannian metric}/\emph{manifold}).
Note that in the NGT the symmetric part $g_{ij}$ of the basic
tensor $G_{ij} \ (G_{ij}=g_{ij}+F_{ij})$ is associated with gravity, and the skew-symmetric part $F_{ij}$ is associated with electromagnetism.
The same is true for the symmetric part of the connection and torsion tensor, respectively.

Later, ideas of non-symmetric metric tensor appeared in Moffat's non-symmetric theory of gravity \cite{Mof}.
Moffat's theory extends general relativity by introducing a non-symmetric metric and connection, allowing for richer gravitational dynamics. In~his theory the antisymmetric part is a Proca field (massive Maxwell's field), which is part of the gravitational interaction that contributes to the rotation of~galaxies. The~connection in NGT exhibits regularity, allowing for smooth and physically meaningful solutions without singularities.  

In NGT \cite{EGY}, two new classes of path equations are derived using Baza\'{n}ski's variational approach. These equations describe how test particles and charged particles move, and they also reflect certain quantum characteristics of NGT.
An explicit formula of such a connection, satisfying the \textit{Einstein metricity condition} (EMC), is obtained by localizing the global formula given recently by S. Ivanov and M.~Zlatanovi\' c \cite{IZ1}. These equations not only reveal quantum features intrinsic to the NGT, but also highlight possible interactions between torsion and electromagnetic potential, even in the absence of electromagnetic force.
In \cite{EGY2}, the authors establish new identities for a connection with totally skew-symmetric torsion on NGT. These identities, derived via the Dolan-McCrea variational method, naturally split into symmetric and skew-symmetric parts, generalizing the second Bianchi identity.
%
Recent approaches to modified gravity often rely on post-Riemannian geometry, incorporating torsion and non-metricity as natural extensions of Einstein's GR framework~\cite{IOS}. Such formulations resonate with the original ideas of Einstein's NGT.

While in a Riemannian space the connection coefficients are
expressed through the metric, $g_{ij}$, in Einstein's works on NGT the connection between these magnitudes is determined by the EMC,
i.e. the non-symmetric metric tensor $G$ and the connection components $\Gamma_{ij}^k$ are related by the equations
\begin{equation}\label{metein-0}
 \frac{\partial G_{ij}}{\partial x^m}-\Gamma ^p_{im}G_{pj}
-\Gamma^p_{mj}G_{ip}=0.
\end{equation}
A generalized Riemannian manifold satisfying the EMC \eqref{metein-0} is also called an NGT-space \cite{Ein,Mof}.
The~choice of a connection in NGT is not uniquely determined. In particular, in NGT there exist two types of covariant derivatives, for example for the tensor $a^i_j$:
\begin{equation*}
 a{}^{\underset+i}_{\underset+j|m}=\frac{\partial a^i_{j}}{\partial x^m}
 +\Gamma^i_{pm}a^p_j-\Gamma^p_{jm}a^i_p; \quad
 a{}^{\underset-i}_{\underset-j|m}=\frac{\partial a^i_{j}}{\partial
 x^m}+\Gamma^i_{mp}a^p_j-\Gamma^p_{mj}a^i_p,
 \end{equation*}
where the lowering and the rising of indices is defined by
equations
\begin{equation}\label{e2}
G_{pi}G^{pj}=G_{ip}G^{jp}=\delta^j_i.
\end{equation}
A. Einstein considered in NGT only one curvature tensor:
\begin{equation}\label{e}
R^i_{klm}=\Gamma^i_{kl,m}-\Gamma^i_{km,l}-\Gamma^i_{sl}\Gamma^s_{km}+\Gamma^i_{sm}\Gamma^s_{kl},
 \end{equation}
and proved a Bianchi type identity for the covariant curvature tensor $R{}_{iklm}=G_{si}R^{s}{}_{klm}$ (see \cite{Ein1950}):
$$
 R{}_{\underset{-+-+}{iklm}|\,n}+R{}_{\underset{-+++}{ikmn}|\,l} + R{}_{\underset{-+--}{iknl}|\,m}=0.
$$
\subsection{Main Objectives and Structure of the Paper}

The main goal of the paper is to study a generalized Riemannian manifold $(M,G=g+F)$ equipped with a linear connection satisfying EMC (\ref{metein-0}) and admitting a totally skew-symmetric torsion. 
Our key results show that the assumption of the $A$-torsion condition, see \eqref{Eq-A-T}, or equivalently, the preservation of $g$, leads to a simplified form of the Nijenhuis tensor and characterizes weak nearly K\" ahler or weak nearly cosymplectic structures.
In Section~\ref{sec:geomod}, we represent the geometric model with the EMC, define the $A$-torsion condition and prove Proposition~\ref{Pr-2.7}.
%
In Section~\ref{sec:03}, we study application of weak metric structures on generalized Riemannian manifolds, focusing on the interplay between a skew-symmetric endomorphism \(A\) and a self-adjoint endomorphism \(Q>0\). Under the key assumption that \(A\) commutes with \(Q\), \([A, Q] = 0\), we establish conditions under which these tensors admit block-diagonalization, and analyze the existence of linear connections with totally skew-symmetric torsion compatible with the given structures. In~particular, we prove that weak almost Hermitian manifolds admitting Einstein's connections are weak nearly K\"ahler, and describe their decomposition into weighted products of nearly K\"ahler manifolds. Theorems~\ref{mainweak} and \ref{Th-001} show that if an Einstein's connection $\nabla$
satisfies 
the $A$-torsion condition on a weak almost Hermitian manifold, then \(Q\) is parallel with respect to both connections, $\nabla$ and the Levi-Civita connection, and the manifold is weak nearly K\"ahler.

For a weak almost contact metric (a.c.m.) manifold equipped with 
an Einstein's connection $\nabla$ satisfying 
the \(A\)-torsion condition, we have proved that the Reeb vector field \(\xi\) is parallel with respect to the Levi-Civita connection, the Nijenhuis tensor \(N^{wac}\) is totally skew-symmetric, and the contact distribution \(\mathcal{D} = \ker \eta\) is involutive,
the tensor \(Q\) is parallel with respect to $\nabla$
and the Levi-Civita and connection. 
Based on Proposition~\ref{L-3.16} on almost-nearly cosymplectic manifolds, we present Theorem~\ref{T-3.8IZ}, which complements \cite[Theorem~3.8]{IZ1}.
Theorems~\ref{T-3.18} and \ref{Th-3.13} show that if \(Q\) is conformal when restricted to \(\mathcal{D}\), then the manifold locally splits as a weighted product of \(\mathbb{R}\) and a nearly K\" ahler manifold; otherwise, \(\mathcal{D}\) decomposes into mutually orthogonal eigen-distributions of $Q$ with constant eigenvalues, locally giving a weighted product structure of \(\mathbb{R}\) and several nearly K\" ahler manifolds.

\section{Geometric Model}
\label{sec:geomod}

The fundamental (0,2)-tensor $G$ in a non-symmetric (generalized) Riemannian manifold $(M,G)$ is, in general, non-symmetric. It decomposes in two parts, $G=g+F$,
the symmetric part $g$ (called Riemannian metric) and the
skew-symmetric part $F$ (called fundamental 2-form), where
\begin{equation}\label{metric}
g(X,Y)=\frac12\big[G(X,Y)+G(Y,X)\big], \qquad F(X,Y)=\frac12\big[G(X,Y)-G(Y,X)\big].
 \end{equation}
We assume that the symmetric part, $g$, is non-degenerate of arbitrary signature, and the skew-symmetric part, $F\ne0$, has a constant rank, e.g., is non-degenerate.
Therefore, we obtain a well-defined (1,1)-tensor $A\ne0$ of constant rank determined by the following condition:
\begin{equation}\label{m1}
 g(AX,Y) = F(X,Y)\quad \text{for all } X, Y \in \mathfrak{X}_M.
\end{equation}
According to the above, since $F$ is skew-symmetric, the tensor $A$ is also skew-symmetric:
\[
 g(AX, Y) = -g(X, AY) \quad \text{for all } X, Y \in \mathfrak{X}_M.
\]

\begin{conv}
\label{Conv-1}\rm
In the whole paper we shall use the capital Latin letters $X,Y,\ldots$
to denote smooth vector fields on a smooth manifold $M$, which commute, $[X,Y]=0$.
\end{conv}

Using the vector fields defined in Convention~\ref{Conv-1},
the Levi-Civita connection $\nabla^g$ corresponding to the symmetric non-degenerate (0,2)-tensor $g$ reduces to the following:
\begin{equation}\label{lcg}
g(\nabla^g_XY,Z)=\frac12\big[Xg(Y,Z)+Yg(X,Z)-Zg(Y,X)\big].
\end{equation}

\subsection{Linear Connections on Generalized Riemannian Manifolds}

We consider linear connections $\nabla$ on a smooth manifold $M$
with a torsion (1,2)-tensor
\[
 T(X,Y)=\nabla_XY-\nabla_YX-[X,Y].
\]
We denote the torsion (0,3)-tensor with respect to $g$ by the same letter,
\begin{equation*}
 T(X,Y,Z):=g(T(X,Y),Z).
\end{equation*}

A linear connection
on a generalized Riemannian manifold $(M,G)$ is completely determined by the torsion tensor and the covariant derivative $\nabla g$ of the symmetric part  $g$ of $G$, see \cite{IZ1}.

\begin{dfn}[see \cite{rz-2025}]\rm
A linear connection $\nabla$ on
a generalized Riemannian manifold $(M, G = g + F)$
is said to have {\it A-torsion condition}, where $A$ is given by \eqref{m1}, if its torsion tensor $T$ satisfies
\begin{equation}\label{Eq-A-T}
T(AX, Y) = T(X,AY)  \quad \text{for all } X, Y \in \mathfrak{X}_M .
\end{equation}
A linear connection \( \nabla \) is said to have {\it $Q$-torsion condition} if its torsion tensor \( T \) satisfies
\begin{equation}\label{Eq-Q-T}
T(QX, Y) = T(X, QY)  \quad \text{for all } X, Y \in \mathfrak{X}_M,
\end{equation}
where \( Q: TM \to TM \) is an endomorphism that is self-adjoint with respect to the symmetric part $g$ of metric $G$, i.e. $g(QX, Y) = g(X, QY)$ for all  $X, Y \in \mathfrak{X}_M$.
\end{dfn}

Note that the $Q$-torsion condition is trivial when
$Q={\rm Id}$ (or, $Q$ is conformal: $Q=\lambda\,{\rm Id}$).


\smallskip

The Nijenhuis tensor $N_P$ of a (1,1)-tensor $P$ on a smooth manifold $M$ is defined by (e.g. \cite{KN}),
\begin{equation}\label{nuj}
 N_P(X,Y)=[PX,PY]+P^2[X,Y]-P[PX,Y]-P[X,PY].
\end{equation}
The Nijenhuis tensor is skew-symmetric by definition.
We denote the Nijenhuis (0,3)-tensor with respect to a Riemannian metric $g$ with the same letter,
\[
 N_P(X,Y,Z):=g(N_P(X,Y),Z).
\]

The Nijenhuis tensor $N_A$ plays a fundamental role in almost complex (resp. almost para-complex) geometry.
If~$A^2=-{\rm Id}$ (resp. $A^2={\rm Id}$) then the celebrated Nulander-Nirenberg theorem (see, e.g. \cite{KN}) shows that an almost complex structure is integrable if and only if $N_A$ vanishes.

Using the definition of the torsion tensor $T$ of a linear connection $\nabla$ and the covariant derivative $\nabla A$, we can express the Nijenhuis tensor $N_A$ in terms of $T$ and $\nabla A$ as follows:
\begin{align}\label{nuj1}
\notag
N_A(X,Y)&=(\nabla_{AX}A)Y-(\nabla_{AY}A)X-A(\nabla_{X}A)Y+A(\nabla_{Y}A)X\\
&-T(AX,AY)-A^2T(X,Y)+AT(AX,Y)+AT(X,AY).
\end{align}

For a self-adjoint endomorphism \( Q: TM \to TM \)
we have
\begin{align}\label{nujQ}
\notag
N_Q(X,Y,Z)&=g((\nabla_{QX}Q)Y,Z)-g((\nabla_{QY}Q)X,Z)-g((\nabla_{X}Q)Y,QZ)+g((\nabla_{Y}Q)X,QZ)\\
&-T(QX,QY,Z)-T(X,Y,Q^2Z)+T(QX,Y,QZ)+T(X,QY,QZ).
\end{align}






\subsection{Einstein Metricity Condition}\label{ngt}
In his attempt to construct an unified field theory,
briefly NGT, A.~Einstein \cite{Ein}
considered a generalized Riemannian manifold
$(M,G=g+F)$ with a linear connection $\nabla$ satisfying  the
EMC \eqref{metein-0}, which has the following
coordinate-free form, see \cite{IZ1}:
\begin{equation}\label{metein}
 XG(Y,Z)-G(\nabla_YX,Z)-G(Y,\nabla_XZ)=0
 \quad \text{for all } X, Y, Z \in \mathfrak{X}_M .
\end{equation}
In the paper we will call such linear connection \textit{Einstein's connections}. 
Using the definition of the torsion (0,3)-tensor,
\eqref{metric} and \eqref{m1}, the EMC \eqref{metein} can be presented in the following form, see \cite{IZ1}:
\begin{equation}\label{metein1}
(\nabla_XG)(Y,Z)=-G(T(X,Y),Z) \ \  \Leftrightarrow \ \
(\nabla_X(g+F))(Y,Z)=-T(X,Y,Z)+T(X,Y,AZ).
\end{equation}
Separating symmetric and skew-symmetric parts of \eqref{metein1} (w.r.t. $Y$ and $Z$), we express the covariant derivatives $\nabla g$ and $\nabla F$ in terms of the exterior derivative $dF$ and torsion:
\begin{align}\label{ein6}
(\nabla_X\,g)(Y,Z)
&=-\frac12\big[T(X,Y,Z)+T(X,Z,Y)-T(X,Y,AZ)-T(X,Z,AY)\big],\\
\label{ein5}
\notag
(\nabla_Z\,F)(X,Y)
&= \frac12\big[ T(X,Z,Y)-T(X,Y,Z)+T(X,Y,AZ)-T(X,Z,AY)\big] \\
&=\frac12\big[dF(X,Y,Z)+T(X,Y,Z)-T(Z,Y,AX)+T(Z,X,AY)\big].
\end{align}
Using the vector fields defined in Convention~\ref{Conv-1}, the co-boundary formula for exterior derivative of a 2-form $F$ reduces to the following
formula (without the coefficient 3, unlike \cite{Blair}):
\begin{align}\label{E-3.3}
 d F(X,Y,Z) = X(F(Y,Z)) + Y(F(Z,X)) + Z(F(X,Y))
.
\end{align}
The connection $\nabla$ of \eqref{metein} is represented in \cite{IZ1} as
\begin{multline}\label{genconein}
g(\nabla_XY,Z)=g(\nabla^g_XY,Z)
+\frac12\big[T(X,Y,Z)-T(X,Z,AY)-T(Y,Z,AX)\big]\\
=g(\nabla^g_XY,Z)-\frac12\big[dF(X,Y,Z)+T(Z,X,Y)+T(Y,Z,X)\big]
+\frac12\big[T(Z,X,AY)+T(Z,Y,AX)\big].
\end{multline}



In local coordinates,
the equations (\ref{ein6})-(\ref{genconein}) have the following form, see \cite{IZ1}:
\begin{equation}\label{local}
\begin{aligned}
\nabla_k F_{ij}&=\dfrac 12\big[dF_{ijk}+T_{ijk}-T_{kjs}A^s_i-T_{kis}A^s_j\big],\\
\nabla_i g_{jk}&=-\dfrac 12\big[T_{ijk}-T_{ijs}A^s_k+T_{ikj}-T_{iks}A^s_j\big],\\
\Gamma_{ijk} &=
\Gamma^g_{ijk} + \frac{1}{2} \big[ T_{ijk} - T_{ikp} A^p_j - T_{jkp} A^p_i \big].
\end{aligned}
\end{equation}

The \emph{contorsion (or, difference)} (1,2)-tensor \( K \)
of a linear connection $\nabla$ is defined~by
\begin{equation*}
 K(X, Y) = \nabla_X Y - \nabla^g_X Y.
\end{equation*}
The contorsion $(0,3)$-tensor $K(X,Y,Z)$ is defined by $K(X,Y,Z):=g(K(X,Y),Z)$.

\begin{lemma}
Let 
\( \nabla \) 
be an Einstein's connection 
on a generalized Riemannian manifold $(M,G=g+F)$ satisfy EMC \eqref{metein}. Then the contorsion and torsion $(0,3)$-tensors of $\nabla$ are related as
\[
2\,K(X,Y,Z) =  T(X,Y,Z)-T(X,Z,AY)-T(Y,Z,AX).
\]
\end{lemma}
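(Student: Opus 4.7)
The statement is essentially a direct rereading of the representation formula \eqref{genconein} that has already been displayed. My plan is therefore:

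\textbf{Step 1.} Start from the definition of the contorsion,
$K(X,Y)=\nabla_X Y-\nabla^g_X Y$, and lower the last index with $g$ to obtain
\[
K(X,Y,Z)=g(\nabla_X Y,Z)-g(\nabla^g_X Y,Z).
\]
\textbf{Step 2.} Substitute the first summand using the first line of \eqref{genconein}, which was derived from the EMC. The $g(\nabla^g_X Y,Z)$ pieces cancel, leaving
\[
K(X,Y,Z)=\tfrac12\big[T(X,Y,Z)-T(X,Z,AY)-T(Y,Z,AX)\big],
\]
and multiplying by $2$ gives the claim.

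If one wished to make the argument self-contained rather than invoking \eqref{genconein}, the route would be: rewrite EMC in the form \eqref{metein1}, split it into symmetric and skew-symmetric parts in $(Y,Z)$ to recover \eqref{ein6}; then apply the standard Koszul-type identity expressing any metric connection with torsion in terms of $\nabla g$ and $T$, namely
\[
2\,g(\nabla_X Y,Z)=2\,g(\nabla^g_X Y,Z)+(\nabla_X g)(Y,Z)+(\nabla_Y g)(X,Z)-(\nabla_Z g)(X,Y)+T(X,Y,Z)+T(Z,X,Y)+T(Z,Y,X),
\]
and simplify using \eqref{ein6} together with the skew-symmetry of $A$ with respect to $g$.

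In either route there is no real obstacle: the only thing to watch is bookkeeping of the $A$-dependent terms when passing indices around, and the consistent use of the skew-symmetry $g(AU,V)=-g(U,AV)$. The result is then immediate.
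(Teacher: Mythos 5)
Your proof is correct and is exactly the paper's argument: the lemma is just the definition $K(X,Y,Z)=g(\nabla_XY,Z)-g(\nabla^g_XY,Z)$ combined with the first line of \eqref{genconein}, and the paper's own proof likewise reads ``it follows directly from \eqref{genconein}.'' The alternative self-contained route you sketch is a reasonable way to rederive \eqref{genconein} itself, but it is not needed for the statement as posed.
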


\begin{proof}
 It follows directly from \eqref{genconein}.
\end{proof}

\subsection{
NGT with Totally Skew-Symmetric Torsion}

Here, we consider a linear connection $\nabla$ with totally skew-symmetric torsion (0,3)-tensor, $T(X,Y,Z)=-T(X,Z,Y)$.
In this case,
\begin{itemize}
 \item[$i)$] the $A$-torsion condition \eqref{Eq-A-T} implies \begin{equation}\label{QAQ}
 T(AX, Y,Z) = T(X, AY,Z) = T(X, Y,AZ).
 \end{equation}
 \item[$ii)$] the $Q$-torsion condition \eqref{Eq-Q-T} implies
\begin{equation}\label{QAQ2}
 T(QX, Y,Z) = T(X, QY,Z) = T(X, Y,QZ).
\end{equation}
\end{itemize}


The following result, see \cite[Theorem~3.1]{IZ1}, presents conditions for the existence and uniqueness of the Einstein's connection on a generalized Riemannian manifold and gives its explicit expression.

\begin{thrm}
\label{pp1}
A generalized Riemannian manifold $(M,G=g+F)$ admits an
Einstein's connection
with totally skew-symmetric torsion $T$ 
if and only if the Nijenhuis tensor $N_A$, the tensor $A$ and the exterior derivative of $F$ satisfy the following relation:
\begin{align}\label{skew1}
\notag
N_A(X,Y,Z)&=\frac23dF(X,Y,AZ)+\frac13dF(AX,Y,Z)+\frac13dF(X,AY,Z)+\frac13dF(AX,AY,AZ)\\
\notag
&-\frac16\big[dF(A^2X,Y,AZ)+dF(A^2X,AY,Z)+dF(X,A^2Y,AZ)
-dF(X,AY,A^2Z)\big]\\
&-\frac16\big[dF(AX,A^2Y,Z)-dF(AX,Y,A^2Z)\big];
\end{align}
moreover, the covariant derivatives of the tensors $F$ and $A$ with respect to
$\nabla^g$ are related by
 \begin{multline}\label{ff2}
(\nabla^g_XF)(Y,Z)=g((\nabla^g_XA)Y,Z)\\
=\frac{1}{3} dF(X,Y,Z)+\frac{1}{3}dF(X,AY,AZ)
-\frac16dF(AX,Y,AZ) -\frac16dF(AX,AY,Z).
\end{multline}

In this case, the totally skew-symmetric torsion $(0,3)$-tensor is completely determined by $dF$:
\begin{equation}\label{tordfnew}
T(X,Y,Z)=-\frac13dF(X,Y,Z),
\end{equation}
the EMC \eqref{metein}
is equivalent to the following two conditions,
see \eqref{ein6}-\eqref{ein5}:
\begin{equation}\label{skew0}
\begin{split}
& (\nabla_Xg)(Y,Z)=-\frac16\big[dF(X,Y,AZ)-dF(X,AY,Z)\big], \\
& (\nabla_XF)(Y,Z)=\frac16\big[2dF(X,Y,Z)-dF(X,Y,AZ)-dF(X,AY,Z)\big],
\end{split}
\end{equation}
and the linear connection $\nabla$ is uniquely determined by the following formula, see \eqref{genconein}:
\begin{equation}\label{newnbl}
g(\nabla_XY,Z)=g(\nabla^g_XY,Z)
  +\frac16\big[ dF(AX,Y,Z) - dF(X,Y,Z)- dF(X,AY,Z) \big].
\end{equation}
\end{thrm}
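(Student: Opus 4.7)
The plan is to prove the equivalence by first extracting, under the assumption of a totally skew-symmetric torsion, the explicit formulas \eqref{tordfnew}, \eqref{skew0}, \eqref{newnbl}, and then using the Nijenhuis identity \eqref{nuj1} with $\nabla=\nabla^g$ to derive \eqref{ff2} and the compatibility condition \eqref{skew1}. The converse then reduces to defining $T:=-\tfrac13 dF$ together with \eqref{newnbl} and verifying EMC, with \eqref{skew1} playing the role of the integrability constraint.

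First I would establish the coboundary identity
\[
dF(X,Y,Z) \;=\; \sum_{\mathrm{cyc}}(\nabla_X F)(Y,Z) \,-\, \sum_{\mathrm{cyc}} T(X,Y,AZ)
\]
for commuting $X,Y,Z$ and an arbitrary connection $\nabla$, which follows from \eqref{E-3.3} by expanding $XF(Y,Z)$ via the product rule and collecting the connection terms into the torsion. Inserting \eqref{ein5} into the first cyclic sum and using the alternating properties $T(X,Y,Z)=T(Y,Z,X)$ and $\sum_{\mathrm{cyc}}T(X,Z,AY) = -\sum_{\mathrm{cyc}}T(X,Y,AZ)$ of a $3$-form yields $dF=-3T$, which is \eqref{tordfnew}. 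Substituting this back into \eqref{ein6}, \eqref{ein5}, and \eqref{genconein}, and simplifying by the skew-symmetries of $dF$ and $A$ (for instance $dF(X,Z,AY)=-dF(X,AY,Z)$ and $dF(Y,Z,AX)=dF(AX,Y,Z)$), gives \eqref{skew0} and \eqref{newnbl}.

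For \eqref{ff2} I would combine the Leibniz-type identity $(\nabla^g_X F)(Y,Z)=g((\nabla^g_X A)Y,Z)$, valid because $\nabla^g g=0$, with the contorsion relation of the preceding lemma (specialized to $T=-\tfrac13 dF$) to express $\nabla^g F$ purely in terms of $dF$; regrouping by the position of $A$ yields the four-term right-hand side of \eqref{ff2}. Finally, inserting the resulting formula for $\nabla^g A$ into the Nijenhuis identity \eqref{nuj1} written with respect to $\nabla^g$ (so the torsion contributions drop) and collecting the resulting $dF$ monomials by the locations of $A$ and $A^2$ produces \eqref{skew1}. The converse is a direct verification: setting $T:=-\tfrac13 dF$ and $\nabla$ by \eqref{newnbl}, the identities \eqref{skew0} hold by construction, hence EMC in the form \eqref{metein1} is satisfied, and the consistency of the resulting $\nabla^g A$ with the intrinsic Nijenhuis tensor $N_A$ computed from \eqref{nuj} is exactly the content of \eqref{skew1}.

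The main obstacle is matching the expansion of \eqref{nuj1} after the substitution of \eqref{ff2} to the precise nine-term combination on the right-hand side of \eqref{skew1}, with its distinct rational coefficients $\tfrac23,\tfrac13,\pm\tfrac16$. The regrouping of $(\nabla^g_{V} A)W$ terms evaluated at arguments drawn from $X,Y,Z,AX,AY,AZ,A^2X,A^2Y,A^2Z$ requires careful and systematic bookkeeping, using only the alternating property of $dF$ and the skew-symmetry $g(A\cdot,\cdot)=-g(\cdot,A\cdot)$; the preceding steps reduce to routine linear-algebraic rearrangements of \eqref{ein5}, \eqref{ein6}, \eqref{genconein}.
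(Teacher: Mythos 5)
The paper does not actually prove Theorem~\ref{pp1}: it is imported verbatim from \cite[Theorem~3.1]{IZ1}, so there is no in-paper argument to compare yours against. Judged on its own terms, your \emph{necessity} direction is correct and complete in outline: the coboundary identity $dF=\sum_{\mathrm{cyc}}(\nabla_XF)(Y,Z)-\sum_{\mathrm{cyc}}T(X,Y,AZ)$ is right (since $F(T(X,Y),Z)=-T(X,Y,AZ)$), inserting \eqref{ein5} and using the $3$-form symmetries does give $dF=-3T$, substituting back yields \eqref{skew0} and \eqref{newnbl}, the contorsion computation with \eqref{newnbl-K} reproduces the four-term right-hand side of \eqref{ff2}, and feeding \eqref{ff2} into the torsion-free Nijenhuis identity reproduces all nine coefficients of \eqref{skew1} (I checked the bookkeeping; it closes).

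The gap is in the \emph{sufficiency} direction. If you define $T:=-\frac13 dF$ and $\nabla$ by \eqref{newnbl}, the torsion is indeed totally skew and the first line of \eqref{skew0} (the $\nabla g$ equation) holds by construction, since it only involves the contorsion and $\nabla^g g=0$. But the second line of \eqref{skew0} does \emph{not} hold by construction: it is equivalent to the statement that the \emph{intrinsic} tensor $(\nabla^g_XF)(Y,Z)$ of the given pair $(g,F)$ equals the right-hand side of \eqref{ff2}, i.e.\ sufficiency requires the implication $\eqref{skew1}\Rightarrow\eqref{ff2}$. Your sentence that this ``is exactly the content of \eqref{skew1}'' reverses the logic: you have only shown $\eqref{ff2}\Rightarrow\eqref{skew1}$. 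Closing the converse requires a universal identity expressing $g((\nabla^g_XA)Y,Z)$ through $dF$ and $N_A$ (the analogue of the classical almost-Hermitian formula $2g((\nabla^g_XJ)Y,Z)=dF(X,Y,Z)-dF(X,JY,JZ)+N_J(Y,Z,JX)$), and for a general skew-symmetric $A$ of constant rank this is not automatic --- compare \eqref{MAINW}, where the extra tensor $N^{(5)}$ is needed precisely because $dF$ and $N_A$ alone do not determine $\nabla^g A$. You need to either supply and invert such an identity in the setting of the theorem, or restrict to the non-degenerate case and prove the inversion there; as written, the converse is asserted rather than proved.
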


\begin{rmrk}\rm
By \eqref{newnbl}, the contorsion (0,3)-tensor $K(X,Y,Z)$
of a connection $\nabla$ with a totally skew-symmetric torsion $(0,3)$-tensor is given by the following formula:
\begin{equation}\label{newnbl-K}
 K(X,Y,Z)= \frac16\big[ dF(AX,Y,Z) - dF(X,AY,Z) - dF(X,Y,Z) \big].
\end{equation}
\end{rmrk}

The formulas in Theorem~\ref{pp1} are especially meaningful under the assumption that the torsion
tensor satisfies the $A$-torsion condition (\ref{Eq-A-T}) with totally skew-symmetric property.

\begin{prop}\label{Pr-2.7}
Let $(M, G = g + F)$ be a generalized Riemannian manifold with a
fundamental 2-form $F$. Then an Einstein's connection
\( \nabla \), 
having a totally skew-symmetric torsion \( (0,3) \)-tensor \( T \), satisfies the \( A \)-torsion condition \eqref{Eq-A-T} if and only if it preserves the symmetric part of the metric
\( G \), i.e., $\nabla g=0$.
In this case,
\begin{align}\label{Eq-2.27}
\notag
 & (i)~(\nabla^g_X A)Y = -T(X, Y) \ \Leftrightarrow \
 \nabla^g A = -T,\\
 & (ii)~N_A(X,Y,Z)=\dfrac43\,dF(X,Y,AZ).
\end{align}
\end{prop}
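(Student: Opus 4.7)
The plan is to prove the equivalence from the formula \eqref{ein6} and then establish (i), (ii) by substituting the $A$-torsion condition into the identities \eqref{ff2} and \eqref{skew1} of Theorem~\ref{pp1}.

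First I would handle the equivalence. Since the torsion $(0,3)$-tensor $T$ is totally skew-symmetric, the symmetric combination $T(X,Y,Z)+T(X,Z,Y)$ vanishes, so \eqref{ein6} reduces~to
\begin{equation*}
(\nabla_X g)(Y,Z)=\frac12\bigl[T(X,Y,AZ)+T(X,Z,AY)\bigr].
\end{equation*}
For the direction $A$-torsion $\Rightarrow \nabla g=0$, I invoke \eqref{QAQ} to write $T(X,Y,AZ)=T(X,AY,Z)$, and then total skew-symmetry in the last two slots gives $T(X,AY,Z)=-T(X,Z,AY)$, so the bracket vanishes identically. For the converse, $\nabla g=0$ forces $T(X,Y,AZ)=-T(X,Z,AY)=T(X,AY,Z)$, which is one half of \eqref{QAQ}; the other half $T(AX,Y,Z)=T(X,AY,Z)$ follows by a short cyclic manipulation using total skew-symmetry (swap $X\leftrightarrow Z$ and re-order arguments). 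This yields the $A$-torsion condition \eqref{Eq-A-T}.

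Next, under the $A$-torsion condition and \eqref{tordfnew}, the 3-form $dF=-3T$ inherits the symmetry $dF(AX,Y,Z)=dF(X,AY,Z)=dF(X,Y,AZ)$. For part (i), I substitute this into \eqref{ff2}: the three terms $\tfrac13 dF(X,AY,AZ)$, $-\tfrac16 dF(AX,Y,AZ)$ and $-\tfrac16 dF(AX,AY,Z)$ all become equal (each is $dF$ evaluated on $X,Y,Z$ with a total of two $A$'s distributed among the slots), and $\tfrac13-\tfrac16-\tfrac16=0$, leaving $g((\nabla^g_X A)Y,Z)=\tfrac13 dF(X,Y,Z)=-T(X,Y,Z)$, i.e. $\nabla^g A=-T$.

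For part (ii) I plug the same symmetry into the eight-term expression \eqref{skew1} for $N_A$. The first three terms $\tfrac23 dF(X,Y,AZ)+\tfrac13 dF(AX,Y,Z)+\tfrac13 dF(X,AY,Z)$ collapse to $\tfrac43 dF(X,Y,AZ)$, which is the desired right-hand side. For the remaining five terms, each involves a total of three $A$'s distributed among the slots; by repeated application of the $A$-symmetry, all five reduce to a common scalar $\beta$ (say $dF(X,A^2Y,AZ)$), and a direct count of the coefficients gives $\tfrac13\beta-\tfrac16\beta-\tfrac16\beta-\tfrac16\beta+\tfrac16\beta-\tfrac16\beta+\tfrac16\beta=0$, so they cancel and (ii) follows.

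The main obstacle is the combinatorial bookkeeping in the $N_A$ computation: one must carefully verify that every $A$-shifted term in \eqref{skew1} really does equal the same common value $\beta$ under the $A$-torsion condition, so that the coefficients add to zero. Once the $A$-symmetry is established on $dF$ from \eqref{QAQ}, this is mechanical, but it is where a sign error would most easily creep in; the other steps are straightforward applications of the identities collected in Theorem~\ref{pp1}.
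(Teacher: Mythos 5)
Your proposal is correct and follows essentially the same route as the paper: the equivalence comes from the vanishing of the symmetric part of the torsion contribution in \eqref{ein6} (the paper uses its skew-torsion specialization \eqref{skew0}), and parts (i), (ii) come from substituting the $A$-shifting symmetry of $dF=-3T$ into \eqref{ff2} and \eqref{skew1}. Your version merely spells out the coefficient bookkeeping that the paper leaves implicit, and your counts ($\tfrac13-\tfrac16-\tfrac16=0$ for (i), and the seven three-$A$ coefficients summing to zero for (ii)) check out.
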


\begin{proof}
Using the first equation in (\ref{skew0}), the equivalence (with $\nabla g=0$) follows.

$(i)$ Using (\ref{QAQ}), from \eqref{ff2} and  \eqref{tordfnew} we obtain
\begin{equation}\label{ISK}
g\big((\nabla^g_X A)Y, Z\big) = \frac{1}{3} dF(X, Y, Z)=-T(X,Y,Z),
\end{equation}
which completes the proof of $(i)$.

$(ii)$ Using (\ref{QAQ}) in \eqref{skew1}, we get the required equation for the Nijenhuis (0,3)-tensor \( N_A \).
\end{proof}

\begin{rmrk}\rm
By \eqref{Eq-2.27}\,(i), we have $(\nabla^g_X A)X = 0$, which  corresponds to weak nearly (para) K\"ahler or weak nearly (para) cosymplectic structures considered in Section~\ref{sec:03}.
\end{rmrk}








\section{Applications of Weak Metric Structures}
\label{sec:03}

In this section, we supply a number of examples (given below using weak metric structures, see \cite{rov-137,rov-108}) with a (1,1)-tensor $A$ of constant rank.
First, we prove the following.

\begin{lemma}\label{L-2.4}
Let a generalized Riemannian manifold $(M, G = g + F)$ be
equipped with
a self-adjoint (with respect to $g$) endomorphism $Q>0$ such that $[Q,A]=0$.
Then at each point $x\in M$ there is a basis
$\{e_1, Ae_1,\ldots, e_m, Ae_m, \xi_1,\ldots,\xi_s\}$
(called an $A$-$Q$-{basis}),
consisting of mutually orthogonal nonzero vectors of $T_xM$
such that $A$ and $Q$ have block-diagonal structures:
$Q = [\lambda_1{\rm Id}_{\,n_1},\ldots,\lambda_k{\rm Id}_{\,n_k}, \nu_1,\ldots,\nu_s]$ and
$A=[\sqrt{\lambda_1}J_{\,n_1},\ldots,\sqrt{\lambda_k}J_{\,n_k},
{\rm 0}_{\,s}]$, where $\lambda_i>0$, $\nu_i\ne0$ and
$J_{\,n_i}$ is a complex structure $(J^2_{\,n_i}=-{\rm Id}_{\,n_i})$ on a $n_i$-dimensional subspace of $T_xM$.
\end{lemma}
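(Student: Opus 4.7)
The plan is a joint spectral-symplectic normal form analysis of the commuting pair $(A, Q)$. First, since $Q$ is $g$-self-adjoint and positive definite on the Euclidean space $(T_xM, g)$, the spectral theorem yields a $g$-orthogonal decomposition $T_xM = \bigoplus_{\mu} V_\mu$ into $Q$-eigenspaces with distinct positive eigenvalues $\mu$. The commutation $[A,Q]=0$ forces each $V_\mu$ to be $A$-invariant, and $A|_{V_\mu}$ inherits $g$-skew-symmetry, so its kernel $K_\mu := \ker(A|_{V_\mu})$ and the $g$-orthogonal complement $W_\mu$ of $K_\mu$ in $V_\mu$ are both $A$-invariant.

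Next, on $W_\mu$ I would invoke the weak-metric identification $A^2 = -Q$ on the non-kernel part of $A$ (built into the definitions of weak almost Hermitian and weak almost contact metric structures in \cite{rov-137, rov-108}) to conclude that $A|_{W_\mu}^{2} = -\mu\,\mathrm{Id}$. Hence $J_\mu := A|_{W_\mu}/\sqrt{\mu}$ is a $g$-compatible almost complex structure on $W_\mu$, forcing $\dim W_\mu$ to be even, $= 2m_\mu$. Choose a $g$-orthonormal basis $e_1,\ldots,e_{m_\mu}$ of a real Lagrangian subspace of $(W_\mu, F|_{W_\mu})$, where $F|_{W_\mu} = g(A\,\cdot,\,\cdot)$ is the induced symplectic form. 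Then $\{e_i, A e_i\}_{i=1}^{m_\mu}$ is a $g$-orthogonal basis of $W_\mu$, since $g(e_i, A e_j) = F(e_i, e_j) = 0$ by the Lagrangian property and $g(A e_i, A e_j) = -g(A^2 e_i, e_j) = \mu\,g(e_i, e_j) = \mu\,\delta_{ij}$. On $K_\mu$, any $g$-orthogonal basis $\xi_1,\ldots,\xi_{s_\mu}$ works, since $A$ vanishes there identically.

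Concatenating these bases across all eigenspaces $V_\mu$ and relabeling produces the claimed $A$-$Q$-basis: the non-kernel blocks contribute the entries $\lambda_i\,\mathrm{Id}_{n_i}$ of $Q$ and $\sqrt{\lambda_i}\,J_{n_i}$ of $A$, while the kernel vectors contribute the diagonal entries $\nu_j$ of $Q$ and the zeros of $A$. The main obstacle is the step invoking $A^2 = -Q$ on $W_\mu$: this identity is not a formal consequence of $[A,Q]=0$ and $Q>0$ alone and must be extracted from the weak metric structure context (otherwise only the joint eigenspaces of $Q$ and $-A^2$ give the decomposition, with a priori unrelated eigenvalues). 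Once this is in place, the rest of the argument is a standard Hermitian basis construction on a symplectic vector space, with existence of a Lagrangian subspace following because $F|_{W_\mu}$ is compatible with the positive inner product $g|_{W_\mu}$.
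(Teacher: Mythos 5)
Your proof is correct and follows essentially the same route as the paper: use $[A,Q]=0$ to make the $Q$-eigenspaces $A$-invariant, use skew-symmetry of $A$ to get $g(e,Ae)=0$, and pair each basis vector $e$ with $Ae$; the paper merely organizes this as a greedy induction starting from an eigenvector of minimal eigenvalue, rather than via a Lagrangian subspace of each eigenspace. Your closing caveat is well taken and in fact applies equally to the paper's own proof: the normalization $A=[\sqrt{\lambda_1}J_{n_1},\ldots,0_s]$ --- and even the $A$-invariance of the orthogonal complement of $\mathrm{Span}\{e_1,Ae_1\}$, which requires $A^2e_1\in\mathrm{Span}\{e_1,Ae_1\}$ --- does not follow from $[A,Q]=0$ and $Q>0$ alone, but needs $A^2=-Q$ on $(\ker A)^{\perp}$, an identity that holds for the weak almost Hermitian and weak a.c.m.\ structures of Section~3 yet is not among the lemma's stated hypotheses.
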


\begin{proof}
Suppose that $A$ is non-degenerate at $x\in M$.
Let $e_1\in T_xM$ be a unit eigenvector of the self-adjoint operator $Q>0$ with the minimal eigenvalue $\lambda_1\ne0$. Then, $Ae_1\in T_xM$ is orthogonal to $e_1$ and $Q(Ae_1) = A(Qe_1) = \lambda_1 Ae_1$.
Thus, the subspace of $T_xM$ orthogonal to the plane Span$\{e_1,Ae_1\}$ is $Q$-invariant (and $A$-invariant).
Continuing in the same manner, we find a basis $\{e_1, Ae_1,\ldots, e_m, Ae_m\}$ of $T_x M$
consisting of mutually orthogonal vectors.
Hence, $Q$ has $k$ different nonzero eigenvalues $\lambda_1<\ldots<\lambda_k$ of even multiplicities $n_1,\ldots,n_k$,
and $\sum\nolimits_{\,i=1}^k n_i=2m=\dim M$.
In this basis, $A$ and $Q$ have the required block-diagonal structures.
If $A$ is degenerate at $x\in M$ and $(\ker A)_x$ is
$s$-dimensional, then the proof is similar.
\end{proof}

\begin{rmrk}\rm
The condition \( [A, Q] = 0 \), see Lemma~\ref{L-2.4},
is satisfied by structural tensors of all weak metric structures considered in Section~\ref{sec:03}.
\end{rmrk}

\subsection{Weak Almost Hermitian Structure} \label{sec:2.5.2}

Let us consider a weak almost Hermitian manifold
$M(A, Q, g)$, i.e. a Riemannian manifold $(M, g)$ of dimension $n\,(= 2m \geq 4)$ endowed with
non-singular endomorphisms: $A$ (skew-symmetric) and $Q$ (self-adjoint),
and the fundamental 2-form \(F\)
such that the following conditions are valid, see \cite{rov-137}:
\begin{equation}\label{WAH}
A^2 = -Q, \quad g(AX, AY) = g(QX, Y),
\quad F(X,Y)=g(AX,Y).
\end{equation}
From $A^2=-Q$ we conclude that $A$ commutes with $Q$:
 $[A,Q]=0$;
hence
 $F(X, QY) = F(QX, Y)$.

A.~Gray defined in \cite{G-70} a nearly K\"{a}hler structure $(J,g)$, where $J$ is an almost complex structure, using condition that the symmetric part of $\nabla^g J$ vanishes.

\begin{dfn}\rm
A weak almost Hermitian manifold is said to be \textit{weak nearly K\"ahler} if the
covariant derivative of $A$ (or $F$) with respect to the Levi-Civita connection $\nabla^g$
is skew-symmetric:
\[
(\nabla^g_X A)X=0\quad \Longleftrightarrow \quad (\nabla^g_X F)(X,\,\cdot\,)=0.
\]
If $\nabla^g A=0$, then such $M(A, Q, g)$ is called a \textit{weak K\"ahler manifold}; in this case, $\nabla^g Q=0$.
\end{dfn}

At the same time, $M(A, Q, g)$ admits a generalized Riemannian structure $G=g+F$.

\begin{exm}\label{Ex-3.9a}\rm
Let $M(A, Q, g)$ be a  weak nearly K\"{a}hler manifold
with a fundamental 2-form $F$, conside\-red as a generalized Riemannian manifold \((M, G = g + F)\).
Suppose that $\nabla$ is an Einstein's connection
on $M(A, Q, g)$ with totally skew-symmetric torsion.
Let us show that $\nabla$ satisfies the $A$-torsion condition \eqref{QAQ}.
Since the manifold is
weak nearly K\" ahler, we have
\[
g\big((\nabla^g_X A)Y, Z\big) + g\big((\nabla^g_Y A)X, Z\big) = 0.
\]
On the other hand, using equation~(\ref{ff2}), we obtain the following:
\[
g\big((\nabla^g_X A)Y, Z\big) + g\big((\nabla^g_Y A)X, Z\big) = \frac{1}{2}dF(X, AY, AZ) - \frac{1}{2}dF(AX, Y, AZ),
\]
which leads to the equality
$dF(AX, Y, AZ) = dF(X, AY, AZ)$.
This equation, using (\ref{tordfnew}) and the non-degeneracy of $A$, implies \eqref{Eq-A-T}:
 $T(AX, Y, Z) = T(X, AY, Z)$,
and by the totally skew-symmetry of torsion, we conclude the $A$-torsion condition \eqref{QAQ} is true.
\end{exm}

The following result complements Theorem 3.3 of \cite{IZ1}.

\begin{thrm}\label{mainweak}
Let $M(A, Q, g)$ be a weak almost Hermitian manifold with a fundamental 2-form~\(F\), considered as a generalized Riemannian manifold \((M, G = g + F)\).
Suppose that $\nabla$ is an Einstein's connection
with totally skew-symmetric torsion $(0,3)$-tensor $T$. 
If the $A$-torsion condition \eqref{Eq-A-T} is true,
then we get the following:
\begin{equation}\label{Eq-3.2}
 T(AX,Y,Z) =-\frac{1}{3}\,dF(AX, Y, Z)=-\frac{1}{4}\,N_A(X,Y,Z),
\end{equation}
\begin{align}\label{nabla_Q0}
\nabla Q = \nabla^g Q = 0,
\end{align}
and \(M(A, Q, g)\) is a weak nearly K\"ahler manifold.
\end{thrm}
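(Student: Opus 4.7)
The plan is to leverage Proposition~\ref{Pr-2.7}, which already packages the main consequences of combining Einstein metricity with totally skew-symmetric torsion and the $A$-torsion condition: it delivers $\nabla g=0$, the pointwise identity $(\nabla^g_X A)Y = -T(X,Y)$, and the simplified formula $N_A(X,Y,Z) = \tfrac{4}{3}\,dF(X,Y,AZ)$. Once these are in hand, each of the four conclusions reduces to a short computation.

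For \eqref{Eq-3.2}, I would combine \eqref{tordfnew}, $T = -\tfrac{1}{3}\,dF$, with the $A$-torsion identity \eqref{QAQ} applied to $T$. Translated back to $dF$, this yields $dF(AX,Y,Z) = dF(X,Y,AZ)$, and substituting into Proposition~\ref{Pr-2.7}(ii) gives $-\tfrac{1}{4}N_A(X,Y,Z) = -\tfrac{1}{3}\,dF(AX,Y,Z) = T(AX,Y,Z)$. The weak nearly K\"ahler property is then immediate: Proposition~\ref{Pr-2.7}(i) gives $(\nabla^g_X A)X = -T(X,X) = 0$ by the skew-symmetry of $T$.

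For $\nabla^g Q = 0$, I would differentiate $Q = -A^2$ using the Leibniz rule and Proposition~\ref{Pr-2.7}(i) to obtain $(\nabla^g_X Q)Y = T(X,AY) + A\,T(X,Y)$. Pairing with $Z$ and using the skew-symmetry of $A$ gives $T(X,AY,Z) - T(X,Y,AZ)$, which vanishes by \eqref{QAQ}. For $\nabla Q = 0$, the decomposition $\nabla = \nabla^g + K$ together with $\nabla g = 0$ and totally skew $T$ forces the classical identity $K(X,Y,Z) = \tfrac{1}{2}T(X,Y,Z)$, which can also be read off from the lemma preceding Proposition~\ref{Pr-2.7} after simplifying the two $A$-terms via \eqref{QAQ}. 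Hence $(\nabla_X Q)Y = \tfrac{1}{2}\bigl[T(X,QY) - Q\,T(X,Y)\bigr]$, and pairing with $Z$ reduces the vanishing to the identity $T(X,QY,Z) = T(X,Y,QZ)$.

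The main obstacle, such as it is, is the observation that the $A$-torsion hypothesis automatically upgrades to a $Q$-torsion identity via $A^2 = -Q$: applying \eqref{QAQ} twice gives $T(X,QY,Z) = -T(X,A^2Y,Z) = -T(X,Y,A^2Z) = T(X,Y,QZ)$. Once this is noted, everything else is direct substitution into identities already established in Proposition~\ref{Pr-2.7} and Theorem~\ref{pp1}, and no genuinely hard step is expected.
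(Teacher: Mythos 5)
Your proposal is correct and follows essentially the same route as the paper: both rest on Proposition~\ref{Pr-2.7} and Theorem~\ref{pp1}, differentiate $Q=-A^2$ by the Leibniz rule, and exploit the fact that the $A$-torsion condition implies the $Q$-torsion condition. Your use of $(\nabla^g A)=-T$ and $K=\tfrac12 T$ merely streamlines the paper's equivalent computations via \eqref{ff2} and \eqref{newnbl}.
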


\begin{proof}

Since the $A$-torsion condition \eqref{Eq-A-T} is true,
the first equality of \eqref{Eq-3.2} follows from \eqref{tordfnew} and the second equality of \eqref{Eq-3.2} follows from Proposition~\ref{Pr-2.7}\,(ii).
Using \eqref{WAH}
and (\ref{ff2}), we get
\begin{align}\label{nabla_g_Q}
\notag
g\big((\nabla^g_X Q)Y, Z\big) &= -g\big((\nabla^g_X A)AY, Z\big) + g\big((\nabla^g_X A)Y, AZ\big) \\
\notag
&= -\frac{1}{3} dF(X, AY, Z) + \frac{1}{3} dF(X, QY, AZ) + \frac{1}{3} dF(X, Y, AZ) \\
&\quad - \frac{1}{3} dF(X, AY, QZ)
-\frac{1}{6} dF(AX, QY, Z)  +\frac{1}{6} dF(AX, Y, QZ).
\end{align}
Using
\eqref{tordfnew} and the $A$-torsion condition
(which implies the $Q$-torsion condition) we obtain:
\begin{align*}
g\big((\nabla^g_X Q)Y, Z\big) &= T(X, AY, Z) -T(X, QY, AZ) -T(X, Y, AZ) \\
&\quad +T(X, AY, QZ)
+\frac{1}{2}T(AX, QY, Z)  -\frac{1}{2} T(AX, Y, QZ)=0.
\end{align*}
Further, using (\ref{newnbl}) and $\nabla^g Q=0$, we get 
\begin{align}\label{nabla_Q}
\notag
g\big((\nabla_X Q)Y, Z\big) &=g\big(\nabla_X QY, Z\big)-g\big(\nabla_X Y, QZ\big)\\
\notag &= g\big((\nabla^g_X Q)Y, Z\big)
+ \frac{1}{6} dF(AX, QY, Z)- \frac{1}{6} dF(X, QY, Z) - \frac{1}{6} dF(X, AQY, Z)  \\
\notag
&\quad - \frac{1}{6} dF(AX, Y, QZ)+ \frac{1}{6} dF(X, Y, QZ) + \frac{1}{6} dF(X, AY, QZ)\\&= \frac{1}{6} dF(AX, QY, Z)- \frac{1}{6} dF(X, Y, QZ) - \frac{1}{6} dF(AX, QY, Z)  \\
\notag
&\quad - \frac{1}{6} dF(AX, Y, QZ)+ \frac{1}{6} dF(X, Y, QZ) + \frac{1}{6} dF(AX, Y, QZ)=0\end{align}
Similarly to the case of $\nabla^g Q=0$, the $A$-torsion condition yields
$\nabla Q=0$.
By Proposition~\ref{Pr-2.7}\,(i), $(\nabla^g_X A)X=-T(X,X)=0$ holds; hence \(M(A, Q, g)\) is a weak nearly K\"ahler manifold.
\end{proof}

\begin{rmrk}\rm
By \cite[Equation (3.7)]{rz-2025} we have
$dF(AX,Y,Z)=3\,T(QX,Y,Z)$.
By \eqref{Eq-3.2}, we have $dF(AX,Y,Z)= -3\,T(AX,Y,Z)$.
The different expressions for $dF(AX,Y,Z)$ arise since we use different connections: we use a metric connection preserving $G$ in \cite{rz-2025}, whereas in this paper we work with Einstein's connections.
In both cases, we assume that the torsion is totally skew-symmetric.
\end{rmrk}


\begin{dfn}\rm
Take two (or more) almost Hermitian manifolds $M_j(A_j, g_j)$, thus $A_j^2=-{\rm Id}_{\,j}$. The~product $\prod_{\,j=1}^k M_j(\sqrt{\lambda_j}\,A_j, g_j)$ of $k$ weak almost Hermitian manifolds, where $\lambda_j>0$ are different constants, is a weak almost Hermitian manifold with $Q=\bigoplus_{\,j}\lambda_j\,{\rm Id}_{\,j}$. We~call $\prod_{\,j} M_j(\sqrt{\lambda_j}\,A_j, g_j)$ a $(\lambda_1,\ldots,\lambda_k)$-\textit{weighed product of almost Hermitian manifolds} $M_j(A_j, g_j)$.
\end{dfn}

The following example represents a set of 
strictly weak nearly K\"{a}hler manifolds.

\begin{exm}
\rm
Note that the $(\lambda_1,\ldots,\lambda_k)$-weighed product of nearly K\"{a}hler manifolds is a weak nearly K\"{a}hler manifold.
A nearly K\"{a}hler manifold of dimension $\le 4$ is a K\"{a}hler manifold, see \cite{G-70}. The~6-dimensional unit sphere in the set of purely imaginary Cayley numbers is an example of a strictly nearly K\"{a}hler manifold.
The classification of weak nearly K\"{a}hler manifolds in dimensions $\ge 4$ is an open problem.
Some 4-dimensional weak nearly K\"{a}hler ma\-nifolds appear as $(\lambda_1,\lambda_2)$-weighed pro\-ducts of 2-dimensional K\"{a}hler manifolds.
Some 6-dimensional weak nearly K\"{a}hler manifolds are $(\lambda_1,\lambda_2,\lambda_3)$-weighed products of 2-dimensional K\"{a}hler manifolds or $(\lambda_1,\lambda_2)$-weighed products of 2- and 4-dimensional K\"{a}hler manifolds. 
Some 8-dimensional weak nearly K\"{a}hler manifolds are 
$(\lambda_1,\lambda_2,\lambda_3,\lambda_4)$-weighed products of 2-dimensional K\"{a}hler manifolds or $(\lambda_1,\lambda_2)$-weighed products of 2-dimensional K\"{a}hler manifolds and 6-dimensional nearly K\"{a}hler manifolds, or $(\lambda_1,\lambda_2)$-weighed products of 4-dimensional nearly K\"{a}hler manifolds, and simi\-larly for even dimensions~$>8$.
The~$(\lambda_1,\ldots,\lambda_k)$-weighed products of
nearly K\"{a}hler manifolds serve as new models for~NGT.
\end{exm}

\begin{thrm}\label{Th-001}
Let $M(A, Q, g)$ be a weak almost Hermitian manifold with a fundamental 2-form \(F\), considered as a generalized Riemannian manifold \((M, G = g + F)\).
Suppose that an Einstein's connection $\nabla$ on $M$
with totally skew-symmetric torsion
satisfies 
the $A$-torsion condition \eqref{Eq-A-T}.

(i)~If $Q = \lambda\,{\rm Id}$ for $\lambda\in C^\infty(M)$, then $\lambda=const>0$ and
$(\lambda^{-1/2}A,g)$ is a nearly K\"{a}hler structure.

(ii)~If $Q\ne\lambda\,{\rm Id}$ for $\lambda\in C^\infty(M)$, then there exist $k>1$ mutually orthogonal even-dimensional distributions
${\mathcal D}_i\subset TM\ (1\le i\le k)$ such that $\bigoplus_{\,i}{\mathcal D}_i=TM$ and ${\mathcal D}_i$ are the eigen-distributions of $Q$
with
constant eigenvalues $\lambda_i:$  $0<\lambda_1<\ldots<\lambda_k$; moreover, each ${\mathcal D}_i$
defines a $\nabla^g$-totally geodesic foliation
and $M(A,Q,g)$ is locally the $(\lambda_1,\ldots,\lambda_k)$-weighed product of nearly K\"{a}hler manifolds.
\end{thrm}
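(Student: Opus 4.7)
The strategy is to bootstrap from Theorem~\ref{mainweak}, which already provides two very strong inputs: $\nabla^g Q = 0$ and the fact that $M(A,Q,g)$ is weak nearly K\"ahler, i.e.\ $(\nabla^g_X A)X = 0$. With these in hand, part (i) is essentially a rescaling argument and part (ii) is a parallel-distribution / de~Rham-type local splitting.

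For part (i), I would start from $\nabla^g Q = 0$. Writing $Q = \lambda\,\mathrm{Id}$ gives $(X\lambda)\,\mathrm{Id} = 0$ for every vector field $X$, hence $d\lambda = 0$, so $\lambda$ is a positive constant (positivity comes from $Q>0$). Setting $J := \lambda^{-1/2}A$, the relation $A^2 = -Q = -\lambda\,\mathrm{Id}$ yields $J^2 = -\mathrm{Id}$, and the compatibility $g(AX,AY) = g(QX,Y)$ rescales to $g(JX,JY) = g(X,Y)$, so $(J,g)$ is an almost Hermitian structure. Because $\lambda$ is constant, the weak nearly K\"ahler identity $(\nabla^g_X A)X = 0$ transfers without correction to $(\nabla^g_X J)X = 0$, which is exactly the nearly K\"ahler condition for $(J,g)$.

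For part (ii), I would first argue that the eigen-distributions of $Q$ are globally (resp.\ locally) well-defined smooth parallel subbundles. Apply Lemma~\ref{L-2.4} pointwise to obtain eigenvalues $\lambda_1(x) < \ldots < \lambda_k(x)$ of constant multiplicities (the multiplicities are locally constant by upper-semicontinuity of the rank combined with $\nabla^g Q = 0$; the identity $\nabla^g Q = 0$ forces each eigenvalue to be constant along any curve, hence locally constant, since $Q(c(t))$ is parallel and its characteristic polynomial is constant). Then each eigen-distribution $\mathcal{D}_i := \ker(Q-\lambda_i\,\mathrm{Id})$ is $\nabla^g$-parallel, and a parallel distribution is automatically involutive and totally geodesic. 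The $\mathcal{D}_i$ are mutually $g$-orthogonal because $Q$ is self-adjoint, and $\bigoplus \mathcal{D}_i = TM$; the even-dimensionality of each $\mathcal{D}_i$ follows from Lemma~\ref{L-2.4} (since $[A,Q]=0$ implies $A$ preserves $\mathcal{D}_i$ and $A^2 = -\lambda_i\,\mathrm{Id}$ restricts to a complex structure on $\mathcal{D}_i$, up to the factor $\sqrt{\lambda_i}$).

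Once these parallel factors are identified, the local de~Rham decomposition gives an isometric splitting $(M,g) \cong (M_1,g_1)\times\cdots\times(M_k,g_k)$ around each point, with $TM_i = \mathcal{D}_i$. Setting $A_i := \lambda_i^{-1/2} A|_{\mathcal{D}_i}$ yields an almost complex structure on $M_i$ compatible with $g_i$ (same computation as in part (i), now restricted to the factor). The remaining step — which I expect to be the main technical point — is to check that each $(A_i,g_i)$ is nearly K\"ahler; for $X \in \mathcal{D}_i$, the parallelism of $\mathcal{D}_i$ under $\nabla^g$ means $\nabla^g_X$ preserves $\mathcal{D}_i$, so $\nabla^g_X A$ acts within $\mathcal{D}_i$, and the weak nearly K\"ahler identity $(\nabla^g_X A)X = 0$ together with constancy of $\lambda_i$ yields $(\nabla^{g_i}_X A_i)X = 0$. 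The delicate bookkeeping is to verify that cross-terms $(\nabla^g_X A)Y$ with $X\in\mathcal{D}_i$ and $Y\in\mathcal{D}_j$ vanish (or at least are irrelevant), which follows because $A$ preserves the parallel decomposition and $\nabla^g$ does as well, so $(\nabla^g_X A)Y \in \mathcal{D}_j$ for $Y\in\mathcal{D}_j$ — this ensures that the nearly K\"ahler identity survives factorwise and identifies $M$ locally with the $(\lambda_1,\ldots,\lambda_k)$-weighted product of the nearly K\"ahler manifolds $(M_i,A_i,g_i)$.
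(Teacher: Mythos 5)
Your proposal is correct and follows essentially the same route as the paper: both parts rest on Theorem~\ref{mainweak} (giving $\nabla^g Q=0$ and the weak nearly K\"ahler identity), Lemma~\ref{L-2.4} for the block-diagonal eigenstructure, and the de~Rham decomposition along the $\nabla^g$-parallel eigen-distributions of $Q$. The only cosmetic difference is that you obtain involutivity and total geodesy of the ${\mathcal D}_i$ directly from their $\nabla^g$-parallelism, whereas the paper verifies the same facts by an explicit computation with $\nabla^g Q=0$; the substance is identical.
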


\begin{proof}

(i) By \eqref{nabla_Q0}, we get $\lambda=const>0$, hence
$(\lambda^{-1/2}A,g)$ is an almost Hermitian structure.
Since
the $A$-torsion condition
\eqref{Eq-A-T} is true, by Theorem~\ref{mainweak}, $(\lambda^{-1/2}A,g)$ is a nearly K\"{a}hler structure.

(ii) Since $Q=-A^2$ is not conformal, it has $k>1$ different eigenvalues $0<\lambda_1<\ldots<\lambda_k$ of even multiplicities $n_1,\ldots,n_k$. By Lemma~\ref{L-2.4}, there exists an $A$-$Q$-{basis} at a point $x\in M$,
%
in which $A$ and $Q$ have block-diagonal structures:
$Q = [\lambda_1{\rm Id}_{\,n_1},\ldots,\lambda_k{\rm Id}_{\,n_k}]$ and $A=[\sqrt{\lambda_1}J_{\,n_1},\ldots,\sqrt{\lambda_k}J_{\,n_k}]$, where $J_{\,n_i}$ is a complex structure on a $n_i$-dimensional subspace of~$T_xM$.

Since $Q$ is $\nabla^g$-parallel, see \eqref{nabla_Q0}, we get the same structure at every point of $M$, that is, $k$ and all $\lambda_i$ are constant on $M$, and
there exist mutually orthogonal $\nabla^g$-parallel (and $A$-invariant) eigen-distributions ${\mathcal D}_i$ of $Q$ with constant different eigenvalues $\lambda_i$.
Since the $Q$-torsion condition is true,
using \eqref{Eq-Q-T} for any vector fields $X,Y\in{\mathcal D}_i$, i.e., $QX=\lambda_iX$ and $QY=\lambda_iY$, we have
\begin{align*}
 Q[X,Y] & = Q\{\nabla^g_XY-\nabla^g_YX\}
 = \nabla^g_X(QY)-\nabla^g_Y(QX) \\
 & =\lambda_i\{\nabla^g_XY-\nabla^g_YX\} =\lambda_i[X,Y].
\end{align*}
Hence each ${\mathcal D}_i$ is involutive
and defines a foliation ${\mathcal F}_i$.
Similarly we can show that $Q(\nabla^g_XY)=\nabla^g_X(QY)=\lambda_i\nabla^g_XY$,
hence ${\mathcal F}_i$ is a $\nabla^g$-totally geodesic foliation,
and by de Rham Decomposition Theorem (see \cite{KN}), 
our manifold splits and is the
$(\lambda_1,\ldots,\lambda_k)$-weighed product of
almost Hermitian manifolds.
By Theorem~\ref{mainweak}, the factors are
nearly K\"{a}hler~manifolds.
\end{proof}

\begin{exm}\rm
Let a generalized Riemannian manifold \((M, G = g + F)\)
be represented as the $(\lambda_1,\ldots,\lambda_k)$-weighed product of nearly K\"{a}hler manifolds
$M_j(A, g_j)\ (1\le j\le k)$.
We get a weak nearly K\"{a}hler structure on $M$ with $Q=\bigoplus_{\,j}\lambda_j\,{\rm Id}_{\,j}$ for some constants $\lambda_j>0$, hence $\nabla^g Q=0$.

By \cite[Theorem~3.3]{IZ1}, for any $j$ there exists a unique Einstein's connection $\nabla^{(j)}$ on a nearly K\"{a}hler manifold $M_j(A_j, g_j)$. Its torsion is determined by
\eqref{Eq-3.2}.
%
It was shown in Example~\ref{Ex-3.9a} that
%
this $\nabla^{(j)}$ is an Einstein's connection on the weak nearly K\"{a}hler manifold
$M_j(\sqrt{\lambda_j}A_j,\,\lambda_j\,{\rm Id}_{\,j},\,g_j)$ satisfying the $A$-torsion condition \eqref{Eq-A-T}.
A unique linear connection \(\nabla\) on \((M, G = g + F)\) with a totally skew-symmetric torsion satisfying EMC \eqref{metein} and the $A$-torsion condition \eqref{Eq-A-T}
is the metric connection, i,e. $\nabla g=0$, its torsion is $T=-\nabla^g A$, see Proposition~\ref{Pr-2.7}\,(i),
and $\nabla Q=0$, see \eqref{nabla_Q0}.
By the above, this Einstein metric connection has the following~form:
$\nabla=\bigoplus_{\,j}\nabla^{(j)}$.
\end{exm}

\subsection{Weak Almost Contact Metric Structure}

Contact Riemannian geomet\-ry is of growing interest due to its important role in both theoretical physics and pure mathematics. Weak a.c.m. structures, i.e., the complex structure on the contact distribution is approximated by a non-singular skew-symmetric tensor, allowed us to take a new look at the theory of contact manifolds and find new applications.

\begin{dfn}\rm
A \textit{weak a.c.m. manifold} $M(A, Q, \xi, \eta, g)$ is a $(2m+1)$-dimensional Riemannian manifold equipped with a skew-symmetric (1,1)-tensor $A$ of rank $2m$,
a unit vector field $\xi$, a 1-form $\eta$ dual to $\xi$ with respect to the metric $g$,\ $\eta(\xi)=1,\ \eta(X)=g(X,\xi)$,
and a self-adjoint (1,1)-tensor $Q>0$, satisfying the following compatibility conditions:
\begin{align}\label{acon}
A^2=-Q+\eta\otimes\xi, \quad
g(AX,AY)=g(QX,Y)-\eta(X)\eta(Y), \quad
A\xi=0, \quad
Q\xi=\xi.
\end{align}
A weak a.c.m. manifold $M(A,Q,\xi,\eta,g)$ is said to be \textit{weak almost-nearly cosymplectic}
if it satisfies the following condition:
\begin{align}\label{a-nearly-cos}
g((\nabla^g_X A)Y, Z) = -\frac{1}{3} dF(AX, AY, Z) + \frac{1}{6} \eta(Z) d\eta(Y, AX) - \frac{1}{2} \eta(Y) d\eta(AZ, X).
\end{align}
\end{dfn}


From (\ref{acon}) we conclude that $A$ commutes with $Q$:
 $[A,Q]=0$; hence $F(X, QY) = F(QX, Y)$.

If we assume $d\eta=0$, then \eqref{a-nearly-cos} reduces to
\begin{align}\label{near-eta}
 g((\nabla^g_X A)Y, Z) = -\frac{1}{3} dF(AX, AY, Z),
\end{align}
and a weak almost-nearly cosymplectic manifold becomes weak nearly cosymplectic: $(\nabla^g_X A)X=0$.

The following lemma generalizes \cite[Corollary~3.9]{IZ1}.

\begin{lemma}\label{L-Killing}
 The Reeb field $\xi$  of a {weak almost-nearly cosymplectic manifold} $M(A,Q,\xi,\eta,g)$ is
 a geodesic vector field, i.e., $\nabla^g_\xi\,\xi=0$,
 and a Killing vector field, i.e.,
 $g(\nabla^g_X\,\xi,\, Y) + g(\nabla^g_Y\,\xi,\, X)=0$.
\end{lemma}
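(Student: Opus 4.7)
The plan is to exploit the identity $A\xi = 0$ together with the defining equation \eqref{a-nearly-cos} by specializing the free arguments to $\xi$, which annihilates most terms and isolates $\nabla^g\xi$. Two structural ingredients will be used repeatedly: first, $\ker A = \mathrm{span}(\xi)$ (which follows from $A^2 = -Q + \eta\otimes\xi$ together with $Q > 0$ and $Q\xi = \xi$), so $\mathrm{Im}\,A = \xi^\perp$ by skew-symmetry; second, the coordinate-free identities $d\eta(X,Y) = g(\nabla^g_X\xi,Y) - g(\nabla^g_Y\xi,X)$ and $(L_\xi g)(X,Y) = g(\nabla^g_X\xi,Y) + g(\nabla^g_Y\xi,X) =: \sigma(X,Y)$, whose sum gives $2\,g(\nabla^g_X\xi,Y) = d\eta(X,Y)+\sigma(X,Y)$. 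The Killing condition is precisely $\sigma\equiv 0$.

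Differentiating $A\xi=0$ yields $(\nabla^g_X A)\xi = -A(\nabla^g_X\xi)$. For the geodesic equation, substitute $X=Y=\xi$ into \eqref{a-nearly-cos}: the $dF$-term and the first $d\eta$-term vanish because $A\xi = 0$, leaving $g((\nabla^g_\xi A)\xi,Z) = -\tfrac12 d\eta(AZ,\xi)$. Expanding the right-hand side through the Levi-Civita identity gives $\tfrac12\, g(\nabla^g_\xi\xi,AZ)$, while the left-hand side equals $g(\nabla^g_\xi\xi,AZ)$ after using $(\nabla^g_\xi A)\xi = -A\nabla^g_\xi\xi$ and the skew-symmetry of $A$. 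Comparing forces $g(\nabla^g_\xi\xi,AZ)=0$ for every $Z$, so $\nabla^g_\xi\xi\in(\mathrm{Im}\,A)^\perp = \ker A = \mathrm{span}(\xi)$; combined with $g(\nabla^g_\xi\xi,\xi) = 0$ (from $|\xi|=1$), we conclude $\nabla^g_\xi\xi = 0$.

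For the Killing property, substitute $Y=\xi$ into \eqref{a-nearly-cos} and apply $(\nabla^g_X A)\xi = -A(\nabla^g_X\xi)$ plus skew-symmetry of $A$ to obtain
\begin{equation*}
g(\nabla^g_X\xi,AZ) = \frac{1}{6}\,\eta(Z)\,d\eta(\xi,AX) + \frac{1}{2}\,d\eta(X,AZ).
\end{equation*}
Comparing with $g(\nabla^g_X\xi,AZ) = \frac{1}{2}[d\eta(X,AZ) + \sigma(X,AZ)]$ yields
\begin{equation*}
\sigma(X,AZ) = \frac{1}{3}\,\eta(Z)\,d\eta(\xi,AX).
\end{equation*}
Setting $Z = \xi$ makes the left-hand side vanish (since $A\xi = 0$), forcing $d\eta(\xi,AX) = 0$ for all $X$; because $\mathrm{Im}\,A = \xi^\perp$ and $d\eta(\xi,\xi) = 0$ trivially, this upgrades to $d\eta(\xi,\cdot) \equiv 0$ on $TM$. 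Re-inserting above gives $\sigma(X,AZ) = 0$ for all $X,Z$, so $\sigma$ vanishes whenever one entry lies in $\xi^\perp$. Finally $\sigma(X,\xi) = g(\nabla^g_X\xi,\xi) + g(\nabla^g_\xi\xi,X) = 0$ by unit length and the first part, so $\sigma\equiv 0$.

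I expect the main obstacle to be purely one of bookkeeping: the three terms of \eqref{a-nearly-cos} intertwine non-trivially under the substitution $Y=\xi$, and one must extract three separate consequences ($\nabla^g_\xi\xi = 0$, $d\eta(\xi,\cdot) = 0$, and the vanishing of $\sigma$ on $\xi^\perp$) in the correct order. The mildly subtle point is that $d\eta(\xi,\cdot) = 0$ is not part of the hypothesis but emerges as a by-product of the symmetry of $\sigma$; one must notice this before the Killing identity can close up.
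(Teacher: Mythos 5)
Your proof is correct and follows essentially the same route as the paper's: specialize \eqref{a-nearly-cos} at $Y=\xi$ (and $X=\xi$) using $A\xi=0$ and $(\nabla^g_XA)\xi=-A\nabla^g_X\xi$, extract $\nabla^g_\xi\xi=0$ and $d\eta(\xi,\cdot)=0$, and then obtain $g(\nabla^g_X\xi,\cdot)=\frac12\,d\eta(X,\cdot)$ on $\xi^\perp$, which gives the Killing property. The only differences are cosmetic (you phrase the last step via $\sigma=\pounds_\xi g$ and make explicit the surjectivity of $A$ onto $\xi^\perp$, which the paper leaves implicit).
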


\begin{proof}
Replacing $Y$ by $\xi$ in \eqref{a-nearly-cos}, we obtain
\begin{align}\label{E-13C}
 g(\nabla^g_X\,\xi, AZ) = \frac{1}{6}\,\eta(Z)\,d\eta(\xi, AX)
 +\frac{1}{2}\,d\eta(X,AZ).
\end{align}
Replacing $X$ by $\xi$ in the above equation and using
$g(\nabla^g_X\,\xi, \xi)=0$, we obtain
\begin{align}\label{E-L3-13}
 g(\nabla^g_\xi\,\xi, Y) = \frac{1}{2}\,d\eta(\xi, Y).
\end{align}
Using the identity
\begin{align}\label{E-deta-nabla}
 d\eta(X,Y)=g(\nabla^g_X\,\xi, Y)-g(\nabla^g_Y\,\xi, X)
\end{align}
with $X=\xi$ (without the coefficient 2, unlike \cite{Blair})
in \eqref{E-L3-13}, we
get $d\eta(\xi, Y)=g(\nabla^g_\xi\,\xi, Y)$.
Comparing with \eqref{E-L3-13}, we conclude that $\xi$ is a geodesic vector field:
\begin{align}\label{E-L3-13e}
 g(\nabla^g_\xi\,\xi, Y) = 0,\quad d\eta(\xi, Y) = 0.
\end{align}
Representing any vector $Y$ as $Y-\eta(Y)\,\xi=AZ$
and using \eqref{E-L3-13} and \eqref{E-L3-13e},
gives
\begin{align}\label{E-L3-13b}
 g(\nabla^g_X\,\xi, Y) = g(\nabla^g_X\,\xi, AZ)
 = \frac{1}{2}\,d\eta(X,AZ) = \frac{1}{2}\,d\eta(X,Y) .
\end{align}
Using \eqref{E-L3-13b}, we have
 $g(\nabla^g_X\,\xi,\, Y) + g(\nabla^g_Y\,\xi,\, X) = 0$.
Therefore, $\xi$
is a Killing vector field.
\end{proof}

\begin{prop}\label{T-2.13}
Let $M(A, Q, \xi, \eta, g)$ be a weak a.c.m. manifold considered as a generalized Riemannian manifold \((M, G = g + F)\), and $\nabla$ an Einstein's connection
with totally skew-symmetric torsion.
Then
$d\eta(X,\xi)=0$ and $\nabla^g_\xi\,\xi=0$ hold,
i.e., the Reeb vector field $\xi$ is a geodesic vector~field.
\end{prop}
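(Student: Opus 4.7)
The plan is to reduce the proposition to the single algebraic fact $A\nabla^g_\xi\xi=0$, then to upgrade it to $\nabla^g_\xi\xi=0$ using the defining identity $A^2=-Q+\eta\otimes\xi$ together with $Q>0$. The vanishing of $d\eta(X,\xi)$ will then be immediate from the standard expression for $d\eta$ in terms of $\nabla^g\xi$ and the unit length of $\xi$. The approach uses only equation~\eqref{ff2} of Theorem~\ref{pp1}, which is valid under the present hypotheses (Einstein's connection with totally skew-symmetric torsion); the $A$-torsion condition is \emph{not} needed.

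First I would substitute $Z=\xi$ into \eqref{ff2}. Since $A\xi=0$, the two middle summands on the right-hand side drop out, leaving
\[
(\nabla^g_X F)(Y,\xi)=\tfrac{1}{3}\,dF(X,Y,\xi)-\tfrac{1}{6}\,dF(AX,AY,\xi).
\]
On the other hand, $F(Y,\xi)=g(AY,\xi)=0$ by skew-symmetry of $A$ combined with $A\xi=0$, so the metricity of $\nabla^g$ gives, in one Leibniz line,
$(\nabla^g_X F)(Y,\xi)=-F(Y,\nabla^g_X\xi)=g(Y,A\nabla^g_X\xi).$
Equating the two expressions yields an explicit formula for $A\nabla^g_X\xi$ in terms of $dF$. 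Specializing to $X=\xi$, the right-hand side vanishes identically: the first term because $dF(\xi,Y,\xi)=0$ by skew-symmetry, the second because $A\xi=0$. Hence $A\nabla^g_\xi\xi=0$.

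To promote this to $\nabla^g_\xi\xi=0$, note that the structural identity $A^2=-Q+\eta\otimes\xi$ together with $Q>0$ and $Q\xi=\xi$ forces $\ker A=\mathbb{R}\,\xi$: any $V$ with $AV=0$ satisfies $0=A^2V=-QV+\eta(V)\xi=-Q(V-\eta(V)\xi)$, so $V=\eta(V)\xi$ by positive definiteness of $Q$. Since $g(\nabla^g_\xi\xi,\xi)=\tfrac{1}{2}\xi(g(\xi,\xi))=0$, the vector $\nabla^g_\xi\xi$ is orthogonal to $\xi$ and lies in $\ker A$, so it vanishes. Finally, applying \eqref{E-deta-nabla} gives
\[
d\eta(X,\xi)=g(\nabla^g_X\xi,\xi)-g(\nabla^g_\xi\xi,X)=0-0=0.
\]
The only real obstacle is recognizing that $Z=\xi$ is the correct substitution to exploit in \eqref{ff2}; after that, the kernel condition $A\xi=0$ and the positive definiteness of $Q$ close the argument almost automatically.
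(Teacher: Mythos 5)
Your proof is correct and follows essentially the same route as the paper: substitute $\xi$ into one slot of \eqref{ff2}, use $A\xi=0$ to kill the surviving $dF$-terms at $X=\xi$, and invoke non-degeneracy to conclude $\nabla^g_\xi\xi=0$, with $d\eta(X,\xi)=0$ then following from \eqref{E-deta-nabla}. The only cosmetic differences are that the paper sets $Y=\xi$ (arriving at $g(\nabla^g_\xi\xi,QZ)=0$ and using the invertibility of $Q$ directly), whereas you set $Z=\xi$ (arriving at $A\nabla^g_\xi\xi=0$ and closing via the correctly justified fact that $\ker A=\mathbb{R}\,\xi$ together with $g(\nabla^g_\xi\xi,\xi)=0$).
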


\begin{proof}
Using the fact that $g(\xi,\xi)=1$,
$g(\nabla^g_X\,\xi, \xi)=0$, \eqref{acon} and \eqref{ff2}, we have
\begin{align}\label{nwac2}
 (\nabla^g_X\eta)(QZ)=g(\nabla^g_X\,\xi, QZ)
 = -g((\nabla^g_XA)\xi, AZ)
 =\dfrac 13 dF(X,AZ,\xi)+\dfrac 16 dF(AX,QZ,\xi).
\end{align}
Taking $X=\xi$ in \eqref{nwac2}, yields $(\nabla^g_\xi\,\eta)(QZ)=g(\nabla^g_\xi\,\xi, QZ)=0$ for all $Z\in\mathfrak{X}_M$.
By this, since $Q$ is non-degenerate,
$\nabla^g_\xi\,\eta=\nabla^g_\xi\,\xi=0$ and $d\eta(X,\xi)=0$ hold, hence, $\xi$ is a $\nabla^g$-geodesic vector field.
\end{proof}


Let us introduce the (0,3)-tensor, see \cite{rz-2025},
\begin{equation}\label{nwac}
N_A^{\mathrm{wac}} = N_A + d\eta \otimes \eta,
\end{equation}
called the \emph{Nijenhuis tensor in the weak a.c.m. geometry}.

\begin{prop}[see \cite{rov-137}]
For a weak a.c.m. structure $({A},Q,\xi,\eta,g)$, we get
\begin{equation}\label{MAINW}
\begin{aligned}
2\,g\big((\nabla^g_X A)Y, Z\big) &= N^{(5)}(X, Y, Z)
+ dF(X, Y, Z) -dF(X, AY, AZ) + N_A^{\mathrm{wac}}(Y, Z, AX) \\
& + \big[\, d\eta(AY, Z) - d\eta(AZ, Y) \,\big] \eta(X)
- d\eta(X, AY) \eta(Z) + d\eta(X, AZ) \eta(Y)
\end{aligned}
\end{equation}
where the skew-symmet\-ric with respect to $Y$ and $Z$
tensor ${N}^{\,(5)}(X,Y,Z)$ is defined by
\begin{eqnarray}\label{E-N5}
\nonumber
  {N}^{\,(5)}(X,Y,Z)& = ({A} Z)\,(g(X, \widetilde QY)) -({A} Y)\,(g(X, \widetilde QZ)) +g([X, {A} Z], \widetilde QY)\\
 & -\,g([X,{A} Y], \widetilde QZ) +g([Y,{A} Z] -[Z, {A} Y] - {A}[Y,Z],\ \widetilde Q X),
\end{eqnarray}
using the tensor $\widetilde Q=Q-{\rm Id}$.
For particular values of the tensor ${N}^{\,(5)}$ we get
\begin{eqnarray*}
\nonumber
 && {N}^{\,(5)}(X,\xi,Z)
 = g( [\xi, AZ] - A[\xi, Z],\, \widetilde Q X)
 =\frac12\,g((\pounds_\xi A)Z,\,\widetilde Q X),\\
\nonumber
 && {N}^{\,(5)}(\xi,Y,Z) = g([\xi, {A} Z], \widetilde QY)
 -g([\xi,{A} Y], \widetilde QZ) ,\quad
 {N}^{\,(5)}(\xi,\xi,Z)
 =0.
\end{eqnarray*}
\end{prop}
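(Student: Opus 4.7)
The plan is to derive \eqref{MAINW} by a Koszul-type cyclic bookkeeping, treating the weak-structure correction tensor $\widetilde Q = Q - {\rm Id}$ as an error term whose contributions collect into $N^{(5)}$.

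First, because $F(X,Y) = g(AX,Y)$ and $\nabla^g$ is torsion-free, the coboundary formula \eqref{E-3.3}, combined with Convention~\ref{Conv-1} (so that $\nabla^g_X Y = \nabla^g_Y X$ for our test fields), yields the cyclic identity
\begin{equation*}
dF(X,Y,Z) = g\bigl((\nabla^g_X A)Y, Z\bigr) + g\bigl((\nabla^g_Y A)Z, X\bigr) + g\bigl((\nabla^g_Z A)X, Y\bigr).
\end{equation*}
Applying the same identity with $Y,Z$ replaced by $AY,AZ$ gives an expression for $dF(X,AY,AZ)$ in which the inner $(\nabla^g A)$ terms involve $A^2$ on the right-hand slot; here one substitutes $A^2 = -Q + \eta \otimes \xi$ from \eqref{acon}, producing terms in $\nabla^g Q$, in $\eta$, and in $\nabla^g \xi$, the last of which can be exchanged for $d\eta$ through \eqref{E-deta-nabla}.

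Next I would expand $N_A^{\mathrm{wac}}(Y,Z,AX) = g(N_A(Y,Z), AX) + d\eta(Y,Z)\eta(AX)$ using the torsion-free identity $N_A(Y,Z) = (\nabla^g_{AY}A)Z - (\nabla^g_{AZ}A)Y - A(\nabla^g_Y A)Z + A(\nabla^g_Z A)Y$. The heart of the proof is then to form the linear combination
\begin{equation*}
dF(X,Y,Z) - dF(X,AY,AZ) + N_A^{\mathrm{wac}}(Y,Z,AX)
\end{equation*}
and, after repeated use of the skew-symmetry of $A$ with respect to $g$ (hence the skew-symmetry of $(\nabla^g_X A)$ in its last two slots), watch most terms collapse into $2g((\nabla^g_X A)Y, Z)$, with a residue. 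In the classical a.c.m.\ case ($Q = {\rm Id}$ on the contact distribution), that residue reduces to the standard $\eta$/$d\eta$ terms shown on the right of \eqref{MAINW}. In the weak case, the failure $\widetilde Q \ne 0$ creates extra pieces: terms of the shape $(AZ)g(X,\widetilde Q Y)$ and $g([X,AZ],\widetilde Q Y)$ (and antisymmetrized variants), which are precisely the summands defining $N^{(5)}$ in \eqref{E-N5}. Verifying that these residues reassemble with the correct signs and coefficients into $N^{(5)}(X,Y,Z)$ is the main obstacle; it requires careful use of the Koszul formula $2\,g(\nabla^g_X W, V) = X g(W,V) + W g(X,V) - V g(X,W) + g([X,W],V) - g([W,V],X) + g([V,X],W)$ applied to the $(\nabla^g W)V$ terms hidden inside $N_A$ and inside $dF(X,AY,AZ)$.

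Finally, for the particular values of $N^{(5)}$, I would substitute directly into \eqref{E-N5}. Taking $Y = \xi$ and using $A\xi = 0$ together with $Q\xi = \xi$ (so $\widetilde Q \xi = 0$) kills the first four summands and the bracket $A[Y,Z]$-piece, leaving exactly $g([\xi,AZ] - A[\xi,Z],\widetilde Q X) = \tfrac12 g((\pounds_\xi A)Z, \widetilde Q X)$; the factor $\tfrac12$ comes from the normalization convention of $d\eta$ in \eqref{E-deta-nabla}. The case $X = \xi$ is even shorter: two terms of \eqref{E-N5} survive at once. And $N^{(5)}(\xi,\xi,Z) = 0$ is immediate since both $AX$ and $AY$ vanish and $\widetilde Q \xi = 0$.
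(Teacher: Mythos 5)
The paper does not actually prove this proposition---it is quoted from \cite{rov-137}---so there is no internal argument to compare against; your proposal must stand on its own. Your overall strategy is the natural one: the cyclic identity $dF(X,Y,Z)=g((\nabla^g_XA)Y,Z)+g((\nabla^g_YA)Z,X)+g((\nabla^g_ZA)X,Y)$ (which does follow from \eqref{E-3.3}, $F=g(A\cdot,\cdot)$ and the skew-symmetry of $A$), its $(AY,AZ)$-variant with $A^2=-Q+\eta\otimes\xi$ inserted, and the expansion of $N_A^{\mathrm{wac}}(Y,Z,AX)$ via the torsion-free form of \eqref{nuj1}; this is exactly how the classical analogue (Blair's Lemma~6.1, $Q={\rm Id}$) is derived.

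Nevertheless there is a genuine gap: the decisive step---showing that the residue of $dF(X,Y,Z)-dF(X,AY,AZ)+N_A^{\mathrm{wac}}(Y,Z,AX)-2\,g((\nabla^g_XA)Y,Z)$ collects, with the right signs, into $-N^{(5)}(X,Y,Z)$ plus the displayed $\eta$/$d\eta$ terms---is only announced as ``the main obstacle'' and never carried out, yet this is where the entire content of the weak generalization resides (all $\widetilde Q$-contributions come from $(\nabla^g_XA)(AY)=(\nabla^g_XA^2)Y-A(\nabla^g_XA)Y$ and from $g(\,\cdot\,,A^2X)$, and one must track them term by term). Two further points need repair. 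First, an individual term $g(\nabla^g_X\xi,Y)$ cannot be ``exchanged for $d\eta$'' via \eqref{E-deta-nabla}; only its antisymmetrization can, so you must verify that the symmetric parts cancel or are absorbed into $N^{(5)}$ before the right-hand side of \eqref{MAINW} can be written purely in terms of $d\eta$. Second, in the special values your justification of the factor $\tfrac12$ in $N^{(5)}(X,\xi,Z)=\tfrac12\,g((\pounds_\xi A)Z,\widetilde QX)$ by ``the normalization of $d\eta$'' is a non sequitur, since $d\eta$ does not enter that computation at all: under the standard convention $(\pounds_\xi A)Z=[\xi,AZ]-A[\xi,Z]$, the substitution $Y=\xi$ into \eqref{E-N5} gives $g([\xi,AZ]-A[\xi,Z],\widetilde QX)=g((\pounds_\xi A)Z,\widetilde QX)$ with no $\tfrac12$, so the factor is either a convention inherited from \cite{rov-137} or a misprint, and a proof must say which. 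Relatedly, when $Y=\xi$ it is the $[Z,AY]$ summand (not the $A[Y,Z]$ one) that vanishes, since the surviving expression still contains $-A[\xi,Z]$.
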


\begin{prop}\label{L-3.16}
Let $M(A, Q, \xi, \eta, g)$ be a {weak almost-nearly cosymplectic manifold} with a fundamental 2-form $F$, conside\-red as a generalized Riemannian manifold \((M, G = g + F)\).
Suppose~that $\nabla$ is an Einstein's connection with totally skew-symmetric torsion. Then the manifold is  weak nearly cosymplectic, and locally is the metric product of a real line and a weak nearly K\"{a}hler~manifold.
\end{prop}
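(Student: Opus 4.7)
The strategy has two parts: first establish $d\eta=0$ (making the structure weak nearly cosymplectic) and then deduce the local splitting via de Rham's theorem.

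\emph{Preparations.} Lemma~\ref{L-Killing} and Proposition~\ref{T-2.13} give that $\xi$ is Killing, $\nabla^g_\xi\xi=0$, $d\eta(\cdot,\xi)=0$, and $g(\nabla^g_X\xi,Y)=\tfrac12\,d\eta(X,Y)$ for all $X,Y$. Setting $X=\xi$ in \eqref{a-nearly-cos} and using $d\eta(\cdot,\xi)=0$ yields $(\nabla^g_\xi A)=0$; plugging this into the standard identity $dF(X,Y,Z)=\sum_{\mathrm{cyc}}(\nabla^g_XF)(Y,Z)$ produces the auxiliary formula
\[
 dF(X,\xi,Z)=\tfrac12\,d\eta(X,AZ)-\tfrac12\,d\eta(Z,AX).
\]

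\emph{Showing $d\eta=0$.} Equating the weak almost-nearly cosymplectic formula \eqref{a-nearly-cos} with the Einstein formula \eqref{ff2} for $g((\nabla^g_XA)Y,Z)$ gives a master algebraic identity between $dF$ and $d\eta$. The specialization $Z=\xi$, combined with the auxiliary formula (applied also to $dF(AX,\xi,AZ)$ via $A^2=-Q+\eta\otimes\xi$ and $d\eta(\cdot,\xi)=0$), collapses after cancellations to the exchange relation $d\eta(AX,Y)=d\eta(X,AY)$; the specialization $X=\xi$ analogously produces $d\eta(QX,Y)=d\eta(X,QY)$. Since $Q>0$ is self-adjoint and $[A,Q]=0$, the second relation forces $d\eta$ to vanish between distinct $Q$-eigenspaces. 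A further rewriting of the $Z=\xi$ specialization, using $[A,Q]=0$ to express both $d\eta(AX,QZ)$ and $d\eta(AZ,QX)$ as $d\eta(X,QAZ)$, reduces the identity to $d\eta(X,(Q-{\rm Id})AZ)=0$, which kills $d\eta$ on every $Q$-eigenspace with eigenvalue $\lambda\ne1$. On the $\lambda=1$ subspace $V=\ker(Q-{\rm Id})\cap\ker\eta$ (where $A^2=-{\rm Id}$), the induced sub-structure is classically almost-nearly cosymplectic, and the cyclic $(\nabla^g F)$-identity combined with the two exchange relations closes this remaining case, reproducing $d\eta|_V=0$ in the spirit of \cite[Theorem~3.8]{IZ1} applied to the sub-bundle. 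Together with $d\eta(\cdot,\xi)=0$, this gives $d\eta\equiv0$, so \eqref{a-nearly-cos} reduces to \eqref{near-eta}: the manifold is weak nearly cosymplectic.

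\emph{Local product structure.} With $d\eta=0$, the Killing identity yields $\nabla^g\xi=0$; hence both $\mathbb{R}\xi$ and $\mathcal{D}=\ker\eta$ are $\nabla^g$-parallel and integrable. By the de Rham Decomposition Theorem, $(M,g)$ locally splits as $\mathbb{R}\times(N,g_N)$ with $\xi$ tangent to the line factor and $TN=\mathcal{D}$. On $\mathcal{D}$, relations \eqref{acon} read $A^2=-Q$ and $g(AX,AY)=g(QX,Y)$, so $(A|_N,Q|_N,g_N)$ is a weak almost Hermitian structure on $N$ with fundamental $2$-form $F|_N$. Since $\xi$ is parallel, $\nabla^{g_N}$ coincides with the restriction of $\nabla^g$ to $TN$, so the weak nearly cosymplectic condition $(\nabla^g_XA)X=0$ restricts to the weak nearly K\"ahler condition $(\nabla^{g_N}_XA|_N)X=0$ on~$N$. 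The main obstacle is Step~2 — specifically, handling the $\lambda=1$ $Q$-eigenspace where the eigenvalue-gap argument degenerates and one must invoke the cyclic $dF$-identity (or the classical result on the parallel sub-bundle); the remainder of the argument is routine once $d\eta=0$ is in hand.
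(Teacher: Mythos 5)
Your overall strategy --- equate \eqref{a-nearly-cos} with \eqref{ff2}, specialize at $\xi$ to extract exchange relations for $d\eta$, conclude $d\eta=0$, then invoke de Rham --- is the same as the paper's, and your preparations and final splitting paragraph are sound. The genuine gap is the step you yourself flag as ``the main obstacle'': you never actually establish $d\eta=0$ on the unit eigenspace of $Q$. Your relations $d\eta(AX,Y)=d\eta(X,AY)$ and $d\eta(X,(Q-{\rm Id})AZ)=0$ kill $d\eta$ only between distinct $Q$-eigenspaces and on eigenspaces with $\lambda\ne1$; on $V=\ker(Q-{\rm Id})\cap\ker\eta$ (where $A^2=-{\rm Id}$) the exchange relation merely says $d\eta|_V$ is anti-$A$-invariant, which does not force it to vanish. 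The proposed patch --- ``apply \cite[Theorem~3.8]{IZ1} to the sub-bundle'' --- presupposes that $V$ is a smooth, integrable, indeed totally geodesic distribution carrying an induced almost-nearly cosymplectic structure. Nothing at this stage guarantees any of that: $\nabla^g Q=0$ is proved only in Theorem~\ref{T-3.18} and only under the additional $A$-torsion hypothesis, which is not assumed in Proposition~\ref{L-3.16}. So the $\lambda=1$ case is open, and with it the whole claim $d\eta=0$.

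The paper closes this step differently: from its identities it asserts the chain $d\eta(QY,AZ)=dF(AY,AZ,\xi)=dF(Y,Z,\xi)=-d\eta(Y,AZ)$, i.e. $d\eta(({\rm Id}+Q)Y,AZ)=0$, and since $Q>0$ makes ${\rm Id}+Q$ invertible, $d\eta=0$ follows with no case analysis. You should reconcile your sign with theirs before accepting either version: \eqref{E-dF-X} gives $dF(AY,AZ,\xi)=-dF(Y,Z,\xi)$, and combining this with \eqref{deta233} yields precisely your $(Q-{\rm Id})$ relation, whereas the paper's middle equality uses $dF(AY,AZ,\xi)=+dF(Y,Z,\xi)$. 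That sign is exactly the difference between a one-line conclusion and an unresolved eigenspace. To complete your proof you must either derive the $({\rm Id}+Q)$ relation honestly (e.g., by finding an independent identity forcing $dF(\cdot,\cdot,\xi)=0$) or supply a genuine argument killing $d\eta$ on $V$; as submitted, the argument is incomplete.
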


\begin{proof}
From \eqref{E-L3-13b} and \eqref{ff2}, we have
\begin{equation}\label{etaaaa}
\begin{aligned}
 d\eta(X, AZ) &= 2\,g(\nabla^g_X\,\xi, A Z)
 = -2\,g(A\nabla^g_X\,\xi, Z)
 = 2\,g\big((\nabla^g_X A)\xi, Z\big) \\
 & =- \frac{2}{3} dF(X,Z,\xi) + \frac{1}{3} dF(AX, A Z, \xi).
\end{aligned}
\end{equation}
So, for any vector fields $X$ and  $Z$, we have
\begin{equation}\label{deeta32}
 d\eta(X,AZ)=d\eta(AX,Z).
\end{equation}
Comparing \eqref{ff2} and \eqref{a-nearly-cos} with $X$ or $Y$ or $Z$ equal to $\xi$, and using \eqref{E-L3-13e}, we get, respsctively,
\begin{align}\label{E-dF-X}
 dF(\xi, AY, AZ) & = -dF(\xi, Y, Z), \\
\label{E-dF-Y}
 2\,dF(X,\xi, Z) & = dF(AX, \xi, AZ) + 3\,d\eta(X, AZ),\\
\label{E-dF-Z}
 2\,dF(X, Y, \xi) & = - dF(AX, AY, \xi) - d\eta(AX, Y).
\end{align}
From \eqref{E-dF-Y} and \eqref{E-dF-Z} we find
\begin{align*}
 dF(AX, \xi, AY) + 3\,d\eta(X, AY)
 = 2\,dF(X,\xi, Y) =dF(AX, AY, \xi) + d\eta(AX, Y),
\end{align*}
hence, using \eqref{deeta32}, we get
\begin{equation}\label{E-DF-deta}
dF(AX, AY, \xi)=d\eta(X, AY).
\end{equation}
Applying this in \eqref{E-dF-Z} and keeping in mind \eqref{deeta32}, yields
\begin{equation}\label{deta233}
 2\,dF(X, Y, \xi)=-d\eta(X, AY)-d\eta(AX, Y)
 =-2\,d\eta(AX,Y).
\end{equation}
Next, we calculate
\begin{align*}
 d\eta(QY, AZ)= dF(AY,AZ,\xi) = dF(Y,Z,\xi) = -d\eta(AY, Z)
 = -d\eta(Y, AZ),
\end{align*}
hence
\begin{align}\label{detaYQY}
 d\eta(Y+QY, AZ)=0.
\end{align}
Since $Q$ is self-adjoint and positive definite, from \eqref{detaYQY}, using $d\eta(\xi,\,\cdot)=0$ of \eqref{E-L3-13e}, we get $d\eta=0$.
Hence the distribution ${\mathcal D}$ is involutive, i.e., tangent to a codimension-one foliation ${\mathcal F}$.
In view of \eqref{tordfnew}, $T(X, Y, \xi)=0$ and \eqref{near-eta} are true.
Since $\xi$ is a Killing vector field (see Lemma~\ref{L-Killing}), using \eqref{E-deta-nabla} we find $g(\nabla^g_X\,\xi, Y)=0$ for all $X,Y\in\mathfrak{X}_M$,
hence $\xi$ is a $\nabla^g$-parallel vector field: $\nabla^g\,\xi=0$.
Thus, the foliation ${\mathcal F}$ is totally geodesic.
By de Rham Decomposition Theorem, the manifold is weak nearly cosymplectic and locally is the metric product
$\mathbb{R}\times\bar M^{2m}$ of a real line and a weak nearly K\"{a}hler manifold.
\end{proof}

Therefore we generalize \cite[Theorem 3.8]{IZ1} as follows.

\begin{thrm}\label{T-3.8IZ}
Let $M(A, \xi, \eta, g)$ be an almost-nearly cosymplectic manifold with a fundamental 2-form $F$, conside\-red as a generalized Riemannian manifold \((M, G = g + F)\).
Then an Einstein's connection $\nabla$ has
a totally skew-symmetric torsion 
if and only if the manifold is nearly cosymplectic and locally is the metric product of a real line and a nearly K\"{a}hler~manifold. The~torsion is determined by the condition
\[
 T(X,Y,Z) = -\frac13\,dF(X,Y,Z)= -\frac14\,N_A(AX,AY,AZ),
\]
the connection $\nabla$ is uniquely determined by the formula
\[
 g(\nabla_X Y, Z) = g(\nabla^g_X Y, Z) -\frac16\,dF(X,Y,Z),
\]
the covariant derivative of $g$ vanishes: $\nabla g = 0$,
and the covariant derivative of $F$ is
\begin{align*}
 (\nabla_X F)(Y,Z) = \frac13\big\{dF(X,Y,Z)-dF(X,Y,AZ)\big\}.
\end{align*}
\end{thrm}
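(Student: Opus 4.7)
The plan is to bootstrap from Proposition~\ref{L-3.16}: since an almost-nearly cosymplectic structure is precisely a weak almost-nearly cosymplectic structure with $Q=\mathrm{Id}$, Proposition~\ref{L-3.16} immediately supplies the forward direction, namely that the manifold is nearly cosymplectic and locally splits as $M=\mathbb{R}\times\bar M^{2m}$ with $\bar M$ nearly K\"ahler, that $d\eta=0$, $\nabla^g\xi=0$, and that $dF(\,\cdot\,,\,\cdot\,,\xi)=0$. The remaining task is to extract the four explicit formulas for $T$, $\nabla$, $\nabla g$, and $\nabla F$ from the general expressions of Theorem~\ref{pp1}.

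First I would establish the $A$-torsion condition \eqref{Eq-A-T}. Symmetrizing \eqref{ff2} in $X,Y$ and invoking the nearly cosymplectic identity $(\nabla^g_XA)Y+(\nabla^g_YA)X=0$ should yield $dF(X,AY,AZ)=dF(AX,Y,AZ)$; then letting $AZ$ range over the contact distribution $\mathcal D$ (where $A$ is invertible) and using $dF(\,\cdot\,,\,\cdot\,,\xi)=0$ to handle the $\xi$-component separately upgrades this to $dF(X,AY,Z)=dF(AX,Y,Z)$. By \eqref{tordfnew} this is exactly the $A$-torsion condition for totally skew torsion.

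With the $A$-torsion condition in hand, the formulas fall into place: $T=-\tfrac13 dF$ is \eqref{tordfnew}, $\nabla g=0$ is Proposition~\ref{Pr-2.7}, the expression for $(\nabla_XF)(Y,Z)$ follows from the second line of \eqref{skew0} after using $dF(X,AY,Z)=dF(X,Y,AZ)$, and the connection formula comes from \eqref{newnbl} once the two $A$-terms cancel. For the Nijenhuis identity, Proposition~\ref{Pr-2.7}\,(ii) gives $N_A(X,Y,Z)=\tfrac43\,dF(X,Y,AZ)$; evaluating at $(AX,AY,AZ)$ and using $A^2=-\mathrm{Id}+\eta\otimes\xi$ together with $dF(AX,AY,\xi)=d\eta(X,AY)=0$ reduces the right-hand side to $-\tfrac43\,dF(AX,AY,Z)$, and then the standard nearly K\"ahler identity $dF(AX,AY,Z)=-dF(X,Y,Z)$ (derived from the total skew-symmetry of $\nabla^g F$ together with $A^2=-\mathrm{Id}$ and the skewness of $A$ on the $\bar M$ factor, both sides vanishing when a $\xi$ appears) yields $N_A(AX,AY,AZ)=\tfrac43\,dF(X,Y,Z)=-4T(X,Y,Z)$.

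For the converse, given the product $M=\mathbb{R}\times\bar M^{2m}$ with $\bar M$ nearly K\"ahler, I would invoke \cite[Theorem~3.3]{IZ1} to obtain the unique Einstein connection with totally skew torsion on $\bar M$ and combine it with the trivial connection along $\mathbb R$; since $\xi$ is parallel and $F$ is the pullback from $\bar M$, this product connection agrees with the formula in the statement and inherits the EMC and totally skew-symmetric torsion from $\bar M$. I expect the principal technical step to be the verification of the nearly K\"ahler identity $dF(AX,AY,Z)=-dF(X,Y,Z)$ and the careful tracking of which identities on $\mathcal D$ extend to all of $TM$ through the $\xi$-direction; everything else amounts to algebraic manipulation of the expressions already provided by Theorem~\ref{pp1}.
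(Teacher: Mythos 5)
Your proposal is correct and follows the same basic route as the paper: the paper's entire proof is the one line ``This follows from [IZ1, Theorem 3.8] and our Proposition~\ref{L-3.16}'', i.e.\ it delegates the explicit formulas and the converse to the cited external theorem, whereas you reconstruct them internally. What you add is essentially the content of Example~\ref{Ex-3.9B} (the symmetrization of \eqref{ff2} plus $dF(\cdot,\cdot,\xi)=0$ to get the $A$-torsion condition) together with the specializations of Theorem~\ref{pp1} and Proposition~\ref{Pr-2.7}; all four formulas then drop out as you describe, e.g.\ the two $A$-terms in \eqref{newnbl} cancel and \eqref{skew0} collapses to the stated $\nabla F$. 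Your verification of $T=-\tfrac14 N_A(A\cdot,A\cdot,A\cdot)$ is also sound, though the cleanest route is to use the $A$-torsion consequence $dF(AX,AY,Z)=dF(X,A^2Y,Z)=-dF(X,Y,Z)+\eta(Y)\,dF(X,\xi,Z)=-dF(X,Y,Z)$ directly (with $dF(\cdot,\cdot,\xi)=0$ from the proof of Proposition~\ref{L-3.16}), rather than appealing to a separate nearly K\"ahler identity on the factor $\bar M$. The net effect is a self-contained proof where the paper relies on a citation; both are valid, and yours makes visible exactly which hypotheses each formula uses.
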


\begin{proof}
This follows from \cite[Theorem 3.8]{IZ1} and our Proposition~\ref{L-3.16}.
\end{proof}

\begin{exm}\label{Ex-3.9B}\rm
Let $M(A, Q, \xi, \eta, g)$ be a  weak nearly cosymplectic manifold with a fundamental 2-form $F$, considered as a generalized Riemannian manifold \((M, G = g + F)\).
Suppose that $\nabla$ is an Einstein's connection
on $M$ with totally skew-symmetric torsion.
Let us show (similarly, to Example \ref{Ex-3.9a}) that $\nabla$ satisfies the $A$-torsion condition \eqref{QAQ}.

Using equation~(\ref{ff2}), we obtain the following:
\[
0=g\big((\nabla^g_X A)Y, Z\big) + g\big((\nabla^g_Y A)X, Z\big) = \frac{1}{2}dF(X, AY, AZ) - \frac{1}{2}dF(AX, Y, AZ),
\]
which leads to the equality
\[
dF(AX, Y, AZ) = dF(X, AY, AZ).
\]
This equation, using (\ref{tordfnew}), $dF(X, Y, \xi)=0$
(see the proof of Proposition~\ref{L-3.16})
and the non-degeneracy of $A$ on ${\mathcal D}$, implies \eqref{Eq-A-T}:
\[
 T(AX, Y, Z) = T(X, AY, Z).
\]
By the totally skew-symmetry of torsion, we conclude that the $A$-torsion condition \eqref{QAQ} is true.
\end{exm}


\begin{thrm}\label{T-3.18}
Let \( M(A, Q, \xi, \eta, g) \) be a weak a.c.m.
manifold, considered as a generalized Riemannian manifold \( (M, G = g + F) \), and 
$\nabla$ is an Einstein's connection
with totally skew-symmetric torsion satisfying 
the \( A \)-torsion condition \eqref{QAQ}. Then the following properties hold:
\begin{enumerate}
 \item[(i)] The contact distribution ${\mathcal D}=\ker\eta$ is involutive  and
 the Reeb vector field \( \xi \) is parallel with respect to the Levi-Civita connection.
 \item[(ii)] The tensor \( N^{wac}_A\), defined by~\eqref{nwac}, is totally skew-symmetric and $N^{wac}_A(\cdot\,,\cdot\,,\xi) = 0$.
 \item[(iii)] The tensor \( N^{(5)} \), defined by~\eqref{E-N5}, is totally skew-symmetric and is given by
 \begin{align}\label{N51}
 N^{(5)}(X,Y,Z)=-\frac{1}{3} dF(X,Y,Z)-\frac{1}{3} dF(X,AY,AZ).
 \end{align}
 \item[(iv)] The tensor $Q$ satisfies the following equalities:
  $\nabla^g Q =\nabla Q =0$.
\end{enumerate}
\end{thrm}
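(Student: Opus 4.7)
The plan is to leverage Proposition~\ref{Pr-2.7}, which under the standing hypotheses supplies the structural identities $\nabla^g A=-T$ and $N_A(X,Y,Z)=\tfrac{4}{3}\,dF(X,Y,AZ)$, and to reduce everything to manipulations in the torsion via the iterated $A$-torsion condition $T(AX,Y,Z)=T(X,AY,Z)=T(X,Y,AZ)$ from \eqref{QAQ}. The cornerstone on which (ii)--(iv) all rest is the parallelism of the Reeb field $\xi$ established in~(i): once $\nabla^g\xi=0$, identity \eqref{E-deta-nabla} gives $d\eta=0$, so $N_A^{wac}$ collapses to $N_A$ and the $\eta$-laden terms in \eqref{MAINW} disappear.

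For (i), I first substitute $Y=\xi$ into the $A$-torsion condition to obtain $T(AX,\xi)=T(X,A\xi)=0$; since $Q|_{\mathcal{D}}$ is positive definite, $A$ maps $\mathcal{D}$ isomorphically to $\mathcal{D}$, and together with the trivial $T(\xi,\xi)=0$ this yields $T(\,\cdot\,,\xi)=0$ identically. Differentiating the relation $A\xi=0$ and invoking $\nabla^g A=-T$ then gives $A(\nabla^g_X\xi)=T(X,\xi)=0$, so $\nabla^g_X\xi\in\ker A=\mathbb{R}\,\xi$; combined with $\nabla^g_X\xi\perp\xi$, this forces $\nabla^g\xi=0$. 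Involutivity of $\mathcal{D}$ is then immediate from $g([X,Y],\xi)=-g(Y,\nabla^g_X\xi)+g(X,\nabla^g_Y\xi)=0$ for $X,Y\in\mathcal{D}$, and $d\eta=0$ drops out of \eqref{E-deta-nabla}.

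For (ii), writing $N_A^{wac}=N_A=-4\,T(\,\cdot\,,\,\cdot\,,A\,\cdot\,)$ and exploiting $A\xi=0$ makes $N_A^{wac}(\,\cdot\,,\,\cdot\,,\xi)=0$ immediate, while total skew-symmetry reduces to checking the $Y$-$Z$ swap, namely $T(X,Y,AZ)+T(X,Z,AY)=T(X,AY,Z)+T(X,Z,AY)=0$ by $A$-torsion and total skew-symmetry of $T$. For (iii), I substitute $d\eta=0$, $g((\nabla^g_X A)Y,Z)=\tfrac{1}{3}\,dF(X,Y,Z)$, and $N_A^{wac}(Y,Z,AX)=\tfrac{4}{3}\,dF(Y,Z,A^2 X)$ into \eqref{MAINW}; then $A^2 X=-QX+\eta(X)\xi$ together with $dF(\,\cdot\,,\,\cdot\,,\xi)=-3\,T(\,\cdot\,,\,\cdot\,,\xi)=0$ from~(i) replaces $A^2 X$ by $-QX$. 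Solving for $N^{(5)}$ produces $-\tfrac{1}{3}\,dF(X,Y,Z)+dF(X,AY,AZ)+\tfrac{4}{3}\,dF(Y,Z,QX)$, and the identity $dF(X,AY,AZ)=-dF(Y,Z,QX)$ (obtained by iterating $A$-torsion together with $A^2=-Q+\eta\otimes\xi$ and $T(\xi,\,\cdot\,,\,\cdot\,)=0$) collapses this to~\eqref{N51}. Total skew-symmetry of the right-hand side then reduces, via skew-symmetry of $dF$, to the identity $dF(X,AY,AZ)+dF(Y,AX,AZ)=0$, which follows from another application of iterated $A$-torsion, namely $dF(X,AY,AZ)=dF(AX,AY,Z)$.

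For (iv), since $\nabla^g\xi=0$ forces $\nabla^g\eta=0$, writing $Q=-A^2+\eta\otimes\xi$ and invoking $\nabla^g A=-T$ yields $(\nabla^g_X Q)Y=T(X,AY)+A\,T(X,Y)$, whose $g$-pairing with $Z$ is $T(X,AY,Z)-T(X,Y,AZ)=0$ by $A$-torsion. For $\nabla Q=0$, I use that Proposition~\ref{Pr-2.7} guarantees $\nabla g=0$, so $\nabla$ is a metric connection with totally skew-symmetric torsion and contorsion $K(X,Y,Z)=\tfrac{1}{2}\,T(X,Y,Z)$; then $g((\nabla_X Q)Y,Z)-g((\nabla^g_X Q)Y,Z)=\tfrac{1}{2}\,T(X,QY,Z)-\tfrac{1}{2}\,T(X,Y,QZ)$, which vanishes by the $Q$-torsion condition (a consequence of iterated $A$-torsion together with $A^2=-Q+\eta\otimes\xi$ and $T(\,\cdot\,,\xi,\,\cdot\,)=0$). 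The main obstacle I anticipate is the bookkeeping in step (iii): combining \eqref{MAINW} with Proposition~\ref{Pr-2.7} produces a tangled mixture of $dF(\,\cdot\,,A\,\cdot\,,A\,\cdot\,)$ and $dF(\,\cdot\,,\,\cdot\,,Q\,\cdot\,)$ terms, and verifying both the explicit formula and its total skew-symmetry requires repeated interleaving of $A$-torsion with $A^2=-Q+\eta\otimes\xi$ and the vanishing $dF(\,\cdot\,,\,\cdot\,,\xi)=0$ from~(i).
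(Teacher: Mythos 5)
Your proposal is correct, and its overall architecture coincides with the paper's: establish $\nabla^g\xi=0$ and $d\eta=0$ first, then feed these into \eqref{MAINW} and the Nijenhuis formula to get (ii)--(iv). Where you differ is in how individual steps are executed, and your route is genuinely leaner. For (i), the paper computes $(\nabla^g_X\eta)(QZ)$ from \eqref{nwac2} and uses invertibility of $Q$, whereas you first derive $T(\cdot,\xi)=0$ from the $A$-torsion condition plus surjectivity of $A|_{\mathcal D}$, then differentiate $A\xi=0$ with $\nabla^g A=-T$ to land $\nabla^g_X\xi$ in $\ker A\cap\xi^\perp=\{0\}$; both hinge on $A\xi=0$, but yours avoids the $dF$ expansion entirely. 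For (ii) you simply quote Proposition~\ref{Pr-2.7}(ii) (which is stated for general generalized Riemannian manifolds and so applies here), while the paper re-derives the same identity from \eqref{skew1} via \eqref{nuja}; the outcomes agree since $dF(AX,Y,Z)=dF(X,Y,AZ)$ under \eqref{QAQ}. For (iv), your computation $(\nabla^g_XQ)Y=T(X,AY)+A\,T(X,Y)$, which dies instantly under the $A$-torsion condition, replaces the paper's page-long cancellation of $dF$-terms in \eqref{Eq-nabla-g-Q}, and your contorsion argument $K=\tfrac12 T$ (valid since $\nabla g=0$) makes the "similarly $\nabla Q=0$" step explicit, which the paper leaves to the reader. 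The only caution is that your shortcuts repeatedly rely on $T(\cdot,\xi,\cdot)=0$ and $dF(\cdot,\cdot,\xi)=0$ from step (i) when converting $A^2$ into $-Q$; you flag this correctly each time, so there is no gap.
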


\begin{proof} From \eqref{nwac2}, assuming the $A$-torsion condition, we obtain
\begin{equation}\label{etaQ}
\begin{aligned}
(\nabla^g_X\eta)(QZ) &= \frac{1}{3} dF(X, AZ, \xi) + \frac{1}{6} dF(AX, QZ, \xi) \\
&= \frac{1}{3} dF(X, Z, A\xi) + \frac{1}{6} dF(X, QZ, A\xi) = 0\quad\text{for all}\;X,Z\in\mathfrak{X}_M.
\end{aligned}
\end{equation}
Restricting to \(Z\in\mathcal D=\ker\eta\), 
we obtain \(A^2Z=-QZ\); hence, setting \(U=AZ+\xi\) yields
\[
AU=A(AZ+\xi)=A^2Z+A\xi=-QZ.
\]
Therefore, \(QZ=-AU\) lies in ${\mathcal D}$. 
By (\ref{etaQ}), $(\nabla^g_X\eta)(QZ)$ vanishes for every \(QZ\) with \(Z\in\mathcal D\), and since \(Q\) is invertible on \(\mathcal D\), it follows that
\[
(\nabla^g_X\eta)(W)=0\qquad\text{for all }W\in\mathcal D.
\]
Finally, \(\eta(\xi)=1\) implies \((\nabla^g_X\eta)(\xi)=0\), so the above equation extends to all \(W\in\mathfrak{X}_M\). Thus,
\begin{equation}\label{nwac3}
g(\nabla^g_X\,\xi,W)=(\nabla^g_X\,\eta)(W)=0\quad\text{for all}\; X,W\in \mathfrak{X}_M,
\end{equation}
hence \(\nabla^g_X\,\xi=0\) for every \(X\), i.e. the Reeb vector field \(\xi\) is parallel with respect to the Levi-Civita connection.
From \eqref{nwac3}, we obtain
\begin{equation}\label{E-deta0}
d\eta(X,Y) = 0 \quad\text{for all}\; X, Y \in \mathfrak{X}_M,
\end{equation}
which implies that the contact distribution ${\mathcal D} = \ker\eta$ is involutive. 
This completes the proof of (i).

Taking into account 
\eqref{E-deta0}, we observe that
\[
 N^{\,\mathrm{wac}}_A(X, Y, Z) = N_A(X, Y, Z).
\]
Substituting the expression~\eqref{ff2} into~\eqref{MAINW}, we then obtain
\begin{align}
N^{wac}_A(Y,Z,AX) ={}& -\dfrac{1}{3} dF(X,Y,Z) + dF(X,AY,AZ) - N^{(5)}(X,Y,Z) \nonumber\\
& +\dfrac 23 dF(X,AY,AZ)- \dfrac{1}{3} dF(AX,Y,AZ) - \dfrac{1}{3} dF(AX,AY,Z).
\end{align}
In view of the $A$-torsion condition, the above equation becomes
\begin{align}\label{NAA}
N^{wac}_A(Y,Z,AX) ={}& -\dfrac{1}{3} dF(X,Y,Z) + dF(X,AY,AZ) - N^{(5)}(X,Y,Z).
\end{align}
By setting $X=\xi$ in the last equation, we obtain
\[
 N^{(5)}(\xi, Y, Z) = -\dfrac13 dF(\xi, Y, Z) =T(\xi,Y,Z).
\]
Applying (\ref{acon}) and (\ref{E-deta0}), we simplify (\ref{skew1}) to get
\begin{align}\label{nuja}
N_A(X,Y,Z) &= \frac{2}{3} dF(X,Y,AZ) + \frac{1}{3} dF(AX,Y,Z) + \frac{1}{3} dF(X,AY,Z)+ \frac{1}{3} dF(AX,AY,AZ)  \notag \\
&\quad + \frac{1}{6} \big[dF(QX,Y,AZ)+dF(QX,AY,Z)+dF(X,QY,AZ)-dF(X,AY,QZ) \big]\notag \\
&\quad+\dfrac 16\big[dF(AX,QY,Z)-dF(AX,Y,QZ)\big].
\end{align}

From equation~(\ref{nuja}), and using the \(A\)-torsion condition together with its consequence
\[
dF(AX, AY, AZ) = -dF(AX, QY, Z) = -dF(AX, Y, QZ)=-dF(QX,AY,Z),
\]
we obtain
\begin{equation}\label{skewacB1}
\begin{aligned}
N^{wac}_A(X,Y,Z) &=\frac{4}{3} dF(AX,Y,Z),
\end{aligned}
\end{equation}
which shows that the Nijenhuis tensor $N^{wac}_A$ is totally skew-symmetric and
$$
 N^{wac}_A(X,Y,\xi)=N^{wac}_A(\xi,X,Y)=0,
$$
that completes the proof of (ii).

Substituting (\ref{skewacB1}) in (\ref{NAA}), we obtain \eqref{N51}, that completes the proof of (iii).

Using the first equation of \eqref{acon},
\( A^2 = -Q+\eta\otimes\xi \), we get
\[
g\big((\nabla^g_X Q)Y, Z\big)
= -g\big((\nabla^g_X A)A Y, Z\big)
+ g\big((\nabla^g_X A) Y, AZ\big)
+ (\nabla^g_X \eta)(Y)\,\eta(Z)
+ \eta(Y)\,g\big(\nabla^g_X\,\xi, Z\big).
\]
From the property (i) ($\nabla^g\xi=\nabla^g \eta=0$) and (\ref{ff2}), we obtain
\begin{align*}
g\big((\nabla^g_X Q)Y, Z\big)
&= -\frac{1}{3} dF(X, AY, Z) + \frac{1}{3} dF(AX, QY, Z)
- \frac{1}{3} \eta(Y) \, dF(X, \xi, AZ) \\
&\quad +\dfrac 16dF(AX,AY,AZ)-\dfrac 16dF(AX,QY,Z)+\dfrac 16\eta(Y)dF(AX,\xi, Z) \\
&\quad +\frac{1}{3} dF(X, Y, AZ) -\dfrac 13dF(X,AY,QZ)+\dfrac 13\eta(Z)dF(X,AY,\xi)\\
&\quad + \frac{1}{6} dF(AX,Y,QZ)-\dfrac 16\eta(Z)dF(AX,Y,\xi)-\dfrac 16dF(AX,AY,AZ) \\
&=-\frac{1}{3} dF(X, AY, Z) +\frac{1}{3} dF(X, Y, AZ) + \frac{1}{6} dF(AX, QY, Z) -\dfrac 13dF(X,AY,QZ)
\\
&\quad + \frac{1}{6} dF(AX,Y,QZ) + \frac{1}{3} \big[\eta(Y) \, dF(X, AZ,\xi) +\eta(Z)dF(X,AY,\xi)\big] \\
&\quad -\dfrac 16\big[\eta(Z)dF(AX,Y,\xi)+\eta(Y)dF(AX, Z,\xi)\big].
\end{align*}
Under the assumption of the $A$-torsion condition \eqref{QAQ}, and after rearranging the terms in the preceding equation, we obtain:
\begin{align}\label{Eq-nabla-g-Q}
\notag
g\big((\nabla^g_X Q)Y, Z\big)
&=-\frac{1}{3} dF(X, AY, Z) +\frac{1}{3} dF(X, AY, Z)
+ \frac{1}{6} dF(AX, QY, Z) -\dfrac 13dF(X,AY,QZ)\\
\notag
&\quad + \frac{1}{6} dF(X,AY,QZ)  + \frac{1}{3} \big[\eta(Y) \, dF(X, Z,A\xi) +\eta(Z)dF(X,Y,A\xi)\big] \\
\notag
&\quad -\dfrac 16\eta(Z)\big[\eta(Z)dF(X,Y,A\xi)+\eta(Y)dF(X, Z,A\xi)\big].\\[1ex]
&=0.
\end{align}
Similarly  we obtain
$\nabla Q=0$,
that completes the proof of (iv).
\end{proof}

The following theorem is similar to Theorem~\ref{Th-001}.

\begin{thrm}\label{Th-3.13}
Let conditions of Theorem~\ref{T-2.13} be satisfied.
Then the following properties are~true.

(i)~If $Q|_{\,\mathcal D}=\lambda\,{\rm Id}_{\mathcal D}$ for some $\lambda\in C^\infty(M)$, then $\lambda=const>0$ and 
$M(\lambda^{-1/2}A,\xi,\eta,g)$ is locally the product of $\mathbb{R}$ and
a nearly K\"{a}hler manifold.

(ii)~If~$Q|_{\,\mathcal D}\ne\lambda\,{\rm Id}_{\mathcal D}$ where $\lambda\in C^\infty(M)$, then there exist $k>1$ mutually orthogonal even-dimensional distributions
${\mathcal D}_i\subset{\mathcal D}$ such that $\bigoplus_{\,i=1}^k{\mathcal D}_i={\mathcal D}$ and ${\mathcal D}_i$ are eigen-distributions of $Q$ with constant eigenvalues 
$0<\lambda_1<\ldots<\lambda_k$;
moreover,
the distributions ${\mathcal D}_i$ are involutive and define $\nabla^g$-totally geodesic foliations
and $M(A,Q,\xi,\eta, g)$ is locally a $(1,\lambda_1,\ldots,\lambda_k)$-weighed product of a real line and $k$ nearly K\"{a}hler manifolds.
\end{thrm}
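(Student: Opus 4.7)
The plan is to combine the parallelism results of Theorem~\ref{T-3.18} (which I read as the intended ``conditions'' here, since Proposition~\ref{T-2.13} by itself supplies only $\nabla^g_\xi\xi=0$ and $d\eta(\cdot\,,\xi)=0$) with the de Rham Decomposition Theorem, in close analogy with the weak almost Hermitian argument of Theorem~\ref{Th-001}. A preliminary step is to observe that the $A$-torsion condition together with total skew-symmetry of $T$ forces the weak nearly cosymplectic identity $(\nabla^g_X A)X=0$: substituting $Y=X$ into \eqref{ff2} leaves only a multiple of $dF(X,AX,AZ)$, while the $A$-torsion identity combined with total skew-symmetry gives $T(X,AX,\cdot)=0$, hence $dF(X,AX,\cdot)=0$ via \eqref{tordfnew}.

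For part~(i), assume $Q|_{\mathcal D}=\lambda\,\mathrm{Id}_{\mathcal D}$. Using $Q\xi=\xi$, one has $Q=\lambda\,\mathrm{Id}+(1-\lambda)\eta\otimes\xi$, and the parallelism $\nabla^g Q=0$ from Theorem~\ref{T-3.18}(iv), together with $\nabla^g\xi=0$ and $\nabla^g\eta=0$, forces $d\lambda=0$, so $\lambda$ is a positive constant. Then $\tilde A:=\lambda^{-1/2}A$ satisfies $\tilde A^2=-\mathrm{Id}+\eta\otimes\xi$, making $(\tilde A,\xi,\eta,g)$ a genuine almost contact metric structure. Since $\xi$ is $\nabla^g$-parallel, the de Rham Decomposition Theorem (cf.~\cite{KN}) locally splits $M$ as $\mathbb R\times\bar M^{2m}$, where $T\bar M=\mathcal D$. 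On $\bar M$ the restriction $\bar J:=\tilde A|_{\mathcal D}$ is an almost complex structure, and $(\nabla^{\bar g}_X\bar J)X=\lambda^{-1/2}(\nabla^g_X A)X=0$ shows that $(\bar J,\bar g)$ is nearly K\"ahler.

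For part~(ii), apply Lemma~\ref{L-2.4} to $(A,Q)$ on $\mathcal D$ to obtain at each point an $A$-$Q$-basis in which $Q|_{\mathcal D}$ is block-diagonal with pairwise distinct positive eigenvalues $0<\lambda_1<\dots<\lambda_k$ of even multiplicities. Since $\nabla^g Q=0$ and $M$ is connected, $k$ and each $\lambda_i$ are globally constant and the eigen-distributions $\mathcal D_i\subset\mathcal D$ are smooth; each $\mathcal D_i$ is $A$-invariant because $[A,Q]=0$. For $X,Y\in\mathcal D_i$ the identity $Q(\nabla^g_X Y)=\nabla^g_X(QY)=\lambda_i\nabla^g_X Y$ shows that $\mathcal D_i$ is $\nabla^g$-parallel, hence involutive with $\nabla^g$-totally geodesic leaves. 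Combined with $\nabla^g\xi=0$ this produces an orthogonal $\nabla^g$-parallel decomposition $TM=\mathbb R\xi\oplus\mathcal D_1\oplus\dots\oplus\mathcal D_k$, and the de Rham Decomposition Theorem realizes $M$ locally as $\mathbb R\times M_1\times\dots\times M_k$; on each $M_i$ the rescaled tensor $\lambda_i^{-1/2}A|_{\mathcal D_i}$ is an almost complex structure, and the ambient identity $(\nabla^g_X A)X=0$ descends along the totally geodesic leaf to the nearly K\"ahler condition.

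The main obstacle I anticipate is the descent step at the end of~(ii): one must verify that the induced Levi-Civita connection on a leaf of $\mathcal D_i$ coincides with the restriction of $\nabla^g$ (which uses precisely the $\nabla^g$-totally geodesic property just established) and that the algebraic identity $A^2|_{\mathcal D_i}=-\lambda_i\mathrm{Id}_{\mathcal D_i}$ together with $(\nabla^g_X A)X=0$ is genuinely equivalent to the nearly K\"ahler axiom for $\lambda_i^{-1/2}A|_{\mathcal D_i}$. A minor bookkeeping point is that, even if some $\lambda_i=1$ coincides with the $Q$-eigenvalue on $\xi$, the inclusion $\mathcal D_i\subset\ker\eta$ keeps the $\mathbb R$-factor strictly separate from every nearly K\"ahler factor in the de Rham splitting.
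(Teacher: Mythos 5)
Your proposal is correct and follows essentially the same route as the paper: Lemma~\ref{L-2.4} for the pointwise block-diagonalization, the parallelism $\nabla^g Q=0$ and $\nabla^g\xi=0$ from Theorem~\ref{T-3.18} to make $k$ and the $\lambda_i$ constant and the eigen-distributions $\nabla^g$-parallel, then the de~Rham Decomposition Theorem, with $(\nabla^g_XA)X=0$ giving the nearly K\"ahler property of each rescaled factor. Your reading of the hypotheses is also the intended one --- the paper's own proof invokes the $A$-torsion condition and $\nabla^g Q=0$, which come from Theorem~\ref{T-3.18} rather than from Proposition~\ref{T-2.13} as literally stated.
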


\begin{proof}
(i) Since $\nabla^g Q=0$, see Theorem~\ref{T-2.13}\,iii), we get $\lambda=const>0$ (and $\lambda\ne1$), hence
$(\lambda^{-1/2}A, \xi, \eta, g)$ is an a.c.m.
structure.
Since
$\nabla$ satisfies the $A$-torsion condition, then using \eqref{Eq-A-T} for any vector fields $X,Y\in{\mathcal D}$, i.e., $QX=\lambda X$ and $QY=\lambda Y$ with $\lambda\ne1$, we have
\begin{align*}
 Q[X,Y] & = Q\{\nabla^g_XY-\nabla^g_YX\}
 = \nabla^g_X(QY)-\nabla^g_Y(QX) \\
 & =\lambda\{\nabla^g_XY-\nabla^g_YX\} =\lambda[X,Y].
\end{align*}
Hence the contact distribution ${\mathcal D}$ is involutive and defines a foliation ${\mathcal F}$. Similarly we show that $Q(\nabla^g_XY)=\nabla^g_X(QY)=\lambda\nabla^g_XY$,
that is, ${\mathcal F}$ is $\nabla^g$-totally geodesic. 
By Theorem~\ref{T-2.13}, $\xi$ is a $\nabla^g$-geodesic vector field. By de Rham Decomposition Theorem (see \cite{KN}),
$M(\lambda^{-1/2}A, \xi, \eta, g)$ splits and is locally the product of $\mathbb{R}$ and an almost Hermitian manifold.
Since $\nabla$ satisfies the $A$-torsion condition, the second factor is a nearly K\"{a}hler~manifold.

(ii) Sinse $Q|_{\,\mathcal D}$ is not conformal,
it has eigenvalues $0<\lambda_1<\ldots<\lambda_k$ of even multiplicities $n_1,\ldots,n_k$, and $\sum\nolimits_{\,i=1}^k n_i= 2m$.
By Lemma~\ref{L-2.4}, there exists an $A$-$Q$-{basis}
of $T_x M$,
in which $A$ and $Q$ restricted on ${\mathcal D}_x$ have block-diagonal forms:
$A|_{\mathcal D}= [\sqrt{\lambda_1}J_{n_1},\ldots,\sqrt{\lambda_k}J_{n_k}]$
and $Q|_{\mathcal D} = [\lambda_1{\rm Id}_{n_1},\ldots,\lambda_k{\rm Id}_{n_k}]$,
where $J_{n_i}$ is a complex structure on a $n_i$-dimensional subspace of~${\mathcal D}_x$.
Since $Q$ is $\nabla^g$-parallel, we get the same structure at each point of $M$, i.e., $k$ and all $\lambda_i$ are constant on $M$, and
there exist mutually orthogonal $\nabla^g$-parallel
(and $A$-invariant)
eigen-distributions ${\mathcal D}_i\subset{\mathcal D}$ of $Q$ with constant different eigenvalues $\lambda_i$.
The rest of the proof is similar to the proof of Theorem~\ref{Th-001}.
\end{proof}

\begin{exm}\rm
Let a generalized Riemannian manifold \((M, G = g + F)\)
be represented as the $(1,\lambda_1,\ldots,\lambda_k)$-weighed product of $\mathbb{R}$ and $k$ nearly K\"{a}hler manifolds
$M_j(A, g_j)\ (1\le j\le k)$.
We~get a weak nearly K\"{a}hler structure on ${\mathcal D}$ with $Q|_{\mathcal D}=\bigoplus_{\,j}\lambda_j\,{\rm Id}_{\,j}$ for some $\lambda_j\in\mathbb{R}_+$, hence $\nabla^g Q=0$.

By \cite[Theorem~3.8]{IZ1}, for any $j$ there exists a unique Einstein's connection $\nabla^{(j)}$ on a nearly K\"{a}hler manifold $M_j(A, g_j)$. Its torsion is determined by the condition \eqref{Eq-3.2}, which is invariant under the change $A \to \lambda A$. This $\nabla^{(j)}$ is also an Einstein's connection on the weak nearly K\"{a}hler manifold
$M_j(\sqrt{\lambda_j}\,A,\,\lambda_j\,{\rm Id}_{\,j},\,g_j)$ satisfying the $A$-torsion condition \eqref{Eq-A-T} with $A=\sqrt{\lambda_j}\,A$.

A unique linear connection \(\nabla\) on \((M, G = g + F)\) with a totally skew-symmetric torsion satisfying EMC \eqref{metein} and the $A$-torsion condition \eqref{Eq-A-T}
is the metric connection, i,e. $\nabla g=0$, its torsion is $T=-\nabla^g A$, see Proposition~\ref{Pr-2.7}\,(i),
and $\nabla Q=0$, see \eqref{nabla_Q0}.
By the above, this Einstein metric connection on $M$ has the following~form:
$\nabla=\bigoplus_{\,j}\nabla^{(j)}$.
\end{exm}

\begin{rmrk}[Weak Para-Hermitian and Weak Almost Para-Contact Structures]\rm
A~\textit{weak almost para-Hermitian manifold}
$M(A, Q, g)$, is a (pseudo-) Riemannian mani\-fold $(M, g)$ of dimension $n\,(= 2m \ge 4)$ endowed with  non-singular endomorphisms: $A$ (skew-symmetric) and $Q>0$ (self-adjoint)
and the fundamental 2-form \(F\), such that the following conditions hold:
\begin{equation}\label{APH}
A^2=Q, \quad g(AX,AY)=-g(QX,Y),\quad F(X,Y)=g(AX,Y).
\end{equation}
In this case, the skew-symmetric part $F$ of $G=g+F$ is non-degenerate and rank$F=2m$.




A \textit{weak almost para-contact metric manifold} 
$M(A, Q, \xi, \eta, g)$ is a $(2m+1)$-dimensional pseudo-Riemannian manifold of signature $(m+1,m)$ equipped with a (1,1)-tensor $A$ of rank $2m$, a vector field $\xi$, a 1-form $\eta$ dual to $\xi$ with respect to the metric $g$,
and a self-adjoint (1,1)-tensor $Q>0$, satisfying the following conditions:
\begin{equation}\label{apcon}
A^2=Q-\eta\otimes\xi, \ \
g(AX,AY)=-g(QX,Y)+\eta(X)\eta(Y), \ \ A\xi=0, \ \ Q\xi=\xi.
\end{equation}
In this case, the skew-symmetric part $F$ of $G=g+F$ is degenerate, $F(\xi,X)=0$, and rank$F=2m$.


By applying analogous technique, one obtains results similar to Theorems~~\ref{mainweak}, \ref{Th-001}, 
\ref{T-3.8IZ}, \ref{T-3.18}, and \ref{Th-3.13}
 in the cases of weak almost para-Hermitian and weak almost para-contact manifolds. Para cases indicating the sign changes type behaviour of the structure endomorphism $Q$. These sign changes  do not significantly affect the essence of the results. For conciseness, their statements and proofs are omitted. 
\end{rmrk}

\section{Conclusion}

{
The paper presents new applications of weak contact metric structures to generalized Riemanni\-an manifolds admitting the EMC with totally skew-symmetric torsion.
Our future research aims to extend these results by incorporating the \(Q\)-torsion condition instead of the \(A\)-torsion condition, which offers the potential for a richer geometric framework. We expect that analyzing the interaction involving the \(Q\)-torsion condition will lead to more general (specific) results and a deeper understanding of the geometry of weak structures on generalized Riemannian manifolds satisfying EMC with totally skew-symmetric torsion, including new classifications and possible applications in theoretical physics.
In our future analysis of the $Q$-torsion condition, we  examine the interaction between the metric $g$, the fundamental form $F$, and the self-adjoint tensor $Q$, paying special attention to the special cases, where the equalities $\nabla^g Q=0$, $\nabla Q=0$ and $[A,Q]=0$ are~satisfied.}

\bigskip

\noindent {\bf Acknowledgments.} M.Z. was partially supported by the Ministry of Education, Science and Technological Development of the Republic of Serbia, project no. 451-03-137/2025-03/200124 and by the Bulgarian Ministry of Education and Science, Scientific Programme "Enhancing the Research Capacity in Mathematical Sciences (PIKOM)", No. DO1-67/05.05.2022.

\end{document}